\documentclass[11pt]{article}

\usepackage{amssymb,amsmath,amsfonts,amsthm}
\usepackage{mathtools}
\usepackage{latexsym}
\usepackage{graphics}
\usepackage{indentfirst}
\usepackage{enumerate}
\usepackage{url}

\setlength{\textwidth}{15.5cm} \setlength{\headheight}{0.5cm} \setlength{\textheight}{21.5cm}
\setlength{\oddsidemargin}{0.25cm} \setlength{\evensidemargin}{0.25cm} \setlength{\topskip}{0.5cm}
\setlength{\footskip}{1.5cm} \setlength{\headsep}{0cm} \setlength{\topmargin}{0.5cm}

\newenvironment{customthm}[1]
  {\innercustomthm}
  {\endinnercustomthm}

\newtheorem*{thm*}{Theorem}
\newtheorem{thm}{Theorem}
\newtheorem{lem}[thm]{Lemma}

\newtheorem{obs}[thm]{Observation}

\newtheorem{ques}[thm]{Question}
\newtheorem{defin}[thm]{Definition}

\newcommand{\N}{\mathbb{N}}
\newcommand{\Z}{\mathbb{Z}}

\newcommand{\R}{\mathbb{R}}

\newcommand{\col}{\mathrm{col}}
\newcommand{\card}[1]{\left \vert #1 \right \vert}

\begin{document}

\title{List Coloring the Cartesian Product of a Complete Graph and Complete Bipartite Graph}

\author{Hemanshu Kaul\footnote{Department of Applied Mathematics, Illinois Institute of Technology, Chicago, IL 60616. E-mail: {\tt kaul@illinoistech.edu}} \\
Leonardo Marciaga\footnote{Department of Applied Mathematics, Illinois Institute of Technology, Chicago, IL 60616. E-mail: {\tt lmarciaga@hawk.illinoistech.edu}} \\
Jeffrey A. Mudrock\footnote{Department of Mathematics and Statistics, University of South Alabama, Mobile, AL 36688. E-mail: {\tt mudrock@southalabama.edu}} }

\maketitle

\begin{abstract}
We study the list chromatic number of the Cartesian product of a complete graph of order $n$ and a complete bipartite graph with partite sets of size $a$ and $b$, denoted $\chi_{\ell}(K_n \square K_{a,b})$.  At the 2024 Sparse Graphs Coalition's Workshop on algebraic, extremal, and structural methods and problems in graph colouring, Mudrock presented the following question: For each positive integer $a$, does $\chi_{\ell}(K_n \square K_{a,b}) = n+a$ if and only if $b \geq (n+a-1)!^a/(a-1)!^a$?  In this paper, we show the answer to this question is yes by studying $\chi_{\ell}(H \square K_{a,b})$ when $H$ is strongly chromatic-choosable (a special form of vertex criticality) with the help of the list color function and analytic inequalities such as that of Karamata. Our result can be viewed as a generalization of the well-known result that $\chi_{\ell}(K_{a,b}) = 1+a$ if and only if $b \geq a^a$.
\medskip

\noindent {\bf Keywords.}  graph coloring, list coloring, Cartesian product, list color function, chromatic choosablity.

\noindent \textbf{Mathematics Subject Classification.} 05C15.

\end{abstract}

\section{Introduction}\label{intro}

In this paper all graphs are nonempty, finite, simple graphs unless otherwise noted.  Generally speaking we follow West~\cite{W01} for terminology and notation.  The set of natural numbers is $\N = \{1,2,3, \dots \}$.  For $k \in \N$, we write $[k]$ for the set $\{1, \dots, k \}$ and $[0] = \emptyset$. We adopt the convention that $\prod_{i=a}^b x_i = 1$ whenever $a > b$. We use AM-GM inequality to mean the Inequality of Arithmetic and Geometric Means. If $G$ is a graph and $S \subseteq V(G)$, we write $G[S]$ for the subgraph of $G$ induced by $S$.  For $v \in V(G)$, we write $d_G(v)$ for the degree of vertex $v$ in the graph $G$, and we write $N_G(v)$ for the neighborhood of $v$ in $G$.  If $G$ and $H$ are vertex disjoint graphs, the \emph{join} of $G$ and $H$, denoted $G \vee H$, is the graph consisting of $G$, $H$, and additional edges added so that each vertex in $G$ is adjacent to each vertex in $H$.

\subsection{List Coloring Cartesian Products}

List coloring is a variation on the classical vertex coloring problem that was introduced in the 1970s independently by Vizing~\cite{V76} and Erd\H{o}s, Rubin, and Taylor~\cite{ET79}.  In the classical vertex coloring problem we seek a \emph{proper $k$-coloring} of a graph $G$ which is a coloring of the vertices of $G$ with colors from $[k]$ so that adjacent vertices receive different colors. The \emph{chromatic number} of a graph, denoted $\chi(G)$, is the smallest $k$ such that $G$ has a proper $k$-coloring.  For list coloring, we associate a \emph{list assignment} $L$ with a graph $G$ which assigns to each vertex $v \in V(G)$ a list of colors $L(v)$ (we say $L$ is a list assignment for $G$).  The graph $G$ is \emph{$L$-colorable} if there exists a proper coloring $f$ of $G$ such that $f(v) \in L(v)$ for each $v \in V(G)$ (we refer to $f$ as a \emph{proper $L$-coloring} of $G$).  A list assignment $L$ is called a \emph{k-assignment} for $G$ if $|L(v)|=k$ for each $v \in V(G)$.  The \emph{list chromatic number} of a graph $G$, denoted $\chi_\ell(G)$, is the smallest $k$ such that $G$ is $L$-colorable whenever $L$ is a $k$-assignment for $G$.  We say $G$ is \emph{$k$-choosable} if $k \geq \chi_\ell(G)$.

The \emph{Cartesian product} of graphs $M$ and $H$, denoted $M \square H$, is the graph with vertex set $V(M) \times V(H)$ and edges created so that $(u,v)$ is adjacent to $(u',v')$ if and only if either $u=u'$ and $vv' \in E(H)$ or $v=v'$ and $uu' \in E(M)$.  Throughout this paper, if $G = M \square H$ and $u \in V(M)$ (resp. $u \in V(H)$), we let $V_u$ be the subset of $V(G)$ consisting of the vertices with first (resp. second) coordinate $u$.  We also let $G_u = G[V_u]$.  Similarly, if $S \subseteq V(M) \cup V(H)$, we let $V_S = \bigcup_{s \in S} V_s$ and $G_S = G[V_S]$.  By the definition of the Cartesian product of graphs, it is easy to see that $G_u$ is a copy of $H$ (resp. $M$) when $u \in V(M)$ (resp. $u \in V(H)$).  When $L$ is a list assignment for $G$ and $u \in V(G)$, we write $L_u$ for the list assignment for $G_u$ obtained by restricting the domain of of $L$ to $V_u$.  Similarly, when $S \subseteq V(M) \cup V(H)$, we write $L_S$ for the list assignment for $G_S$ obtained by restricting the domain of $L$ to $V_S$.

It is well-known that $\chi(G \square H) = \max \{\chi(G), \chi(H) \}$.  On the other hand, the list chromatic number of the Cartesian product of graphs is not nearly as well understood.  In 2006, Borowiecki, Jendrol, Kr{\'a}l, and Mi{\v s}kuf~\cite{BJ06} showed the following.

\begin{thm}[\cite{BJ06}] \label{thm: Borow1}
For any graphs $G$ and $H$, $\chi_\ell(G \square H) \leq \min \{\chi_\ell(G) + \col(H), \col(G) + \chi_\ell(H) \} - 1.$
\end{thm}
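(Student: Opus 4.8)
The plan is to prove $\chi_\ell(G \square H) \le \col(H) + \chi_\ell(G) - 1$; since $G \square H$ and $H \square G$ are isomorphic, the other term in the minimum follows by interchanging the roles of $G$ and $H$. Put $d = \col(H) - 1$ and $k = d + \chi_\ell(G)$, and let $L$ be an arbitrary $k$-assignment for $G \square H$. By the definition of the coloring number we may label $V(H) = \{v_1, \dots, v_m\}$ so that for every $j \in [m]$ the vertex $v_j$ has at most $d$ neighbors in $H$ among $\{v_1, \dots, v_{j-1}\}$. For $i \in [m]$, write $G^{(i)}$ for the copy of $G$ induced in $G \square H$ on the vertex set $\{(u, v_i) : u \in V(G)\}$.

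I would build a proper $L$-coloring of $G \square H$ by coloring $G^{(1)}, G^{(2)}, \dots, G^{(m)}$ one after another. Assume a proper $L$-coloring has already been produced on $G^{(1)} \cup \dots \cup G^{(j-1)}$; I claim it extends over $G^{(j)}$. For each $u \in V(G)$, the neighbors of $(u, v_j)$ that have already been colored are precisely the vertices $(u, v_i)$ with $v_i v_j \in E(H)$ and $i < j$, of which there are at most $d$. Let $L'((u,v_j))$ be $L((u,v_j))$ with the (at most $d$) colors appearing on those neighbors deleted, so $|L'((u,v_j))| \ge k - d = \chi_\ell(G)$. Because $G^{(j)}$ is a copy of $G$ and $G$ is $\chi_\ell(G)$-choosable, shrinking each $L'((u,v_j))$ to a subset of size exactly $\chi_\ell(G)$ and applying the definition of $\chi_\ell(G)$ gives a proper coloring of $G^{(j)}$ in which each $(u,v_j)$ receives a color from $L'((u,v_j)) \subseteq L((u,v_j))$. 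Adopting these colors keeps the coloring proper on $G^{(1)} \cup \dots \cup G^{(j)}$: edges within $G^{(j)}$ are satisfied because the new coloring is proper for $G^{(j)}$, and an edge from $(u,v_j)$ to an already-colored vertex $(u,v_i)$ is satisfied because the color of $(u,v_i)$ was deleted from $L'((u,v_j))$.

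Iterating over $j = 1, \dots, m$ produces a proper $L$-coloring of $G \square H$, so $G \square H$ is $k$-choosable; as $L$ was an arbitrary $k$-assignment this gives $\chi_\ell(G \square H) \le \col(H) + \chi_\ell(G) - 1$, and the symmetric bound completes the proof. I do not anticipate a genuine obstacle: this is a straightforward greedy peeling of the $H$-fibers along a degeneracy order of $H$. The only points needing care are that $\col(H)$ indeed yields an ordering of $V(H)$ in which every vertex has back-degree at most $\col(H) - 1$, and the (routine) remark that a $\chi_\ell(G)$-choosable graph can be colored from any list assignment whose lists all have size at least $\chi_\ell(G)$.
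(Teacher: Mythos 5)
Your argument is correct: it is the standard greedy proof, peeling off the copies of $G$ along a degeneracy ordering of $H$ so that each vertex retains at least $\chi_\ell(G)$ usable colors, which is essentially the proof given in the cited source~\cite{BJ06} (the present paper only quotes the theorem without proof). The two points you flag — the existence of an ordering with back-degree at most $\col(H)-1$ and the fact that lists of size at least $\chi_\ell(G)$ suffice — are indeed the only routine details, and you handle them correctly.
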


Here $\col(G)$ denotes the \emph{coloring number} of a graph $G$ which is the smallest integer $d$ for which there exists an ordering, $v_1, \dots, v_n$, of the elements in $V(G)$ such that each vertex $v_i$ has at most $d-1$ neighbors among $v_1, \dots, v_{i-1}$.  For this paper, it is important to note that Theorem~\ref{thm: Borow1} implies $\chi_\ell(G \square K_{a,b}) \leq \chi_\ell(G) + a$.

It is also proven in~\cite{BJ06} that the bound in Theorem~\ref{thm: Borow1} is tight.   

\begin{thm}[\cite{BJ06}] \label{thm: Borow2}
Suppose $G$ is a graph with $n$ vertices.  Then, $\chi_\ell(G \square K_{a,b}) = \chi_\ell(G) + a$ whenever $b \geq (\chi_\ell(G) + a - 1)^{an}$.
\end{thm}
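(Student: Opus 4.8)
\medskip

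\noindent I would prove this as follows; the inequality $\chi_\ell(G \square K_{a,b}) \le \chi_\ell(G) + a$ is already immediate from Theorem~\ref{thm: Borow1}, so the plan is to produce, under the hypothesis $b \ge (k+a-1)^{an}$ with $k := \chi_\ell(G)$ and $n := |V(G)|$, a $(k+a-1)$-assignment $L$ for $G \square K_{a,b}$ admitting no proper $L$-coloring. Let $A = \{x_1, \dots, x_a\}$ and $B$ be the partite sets of $K_{a,b}$; recall that each $G_{x_i}$ and each $G_w$ ($w \in B$) is a copy of $G$, that in the product every vertex $(v, w)$ with $w \in B$ is adjacent to all of $(v, x_1), \dots, (v, x_a)$, and that the copies $G_{x_1}, \dots, G_{x_a}$ are pairwise vertex-disjoint and pairwise non-adjacent (since $A$ is an independent set of $K_{a,b}$).

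The construction uses $a+1$ pairwise disjoint palettes of colors: sets $P_1, \dots, P_a$ with $|P_i| \ge k+a-1$, and a further set $Q$, disjoint from every $P_i$, large enough to carry a $(k-1)$-assignment $M$ of $G$ for which $G$ has no proper $M$-coloring (such an $M$ exists by the definition of $\chi_\ell(G) = k$). First I would assign to the copy $G_{x_i}$, for each $i \in [a]$, an \emph{arbitrary} $(k+a-1)$-assignment $L_{x_i}$ whose colors all lie in $P_i$. Since each $G_{x_i}$ has $n$ vertices with lists of size $k+a-1$, it has at most $(k+a-1)^n$ proper colorings, and because the $G_{x_i}$ are disjoint and mutually non-adjacent, the induced subgraph on $V_{x_1} \cup \dots \cup V_{x_a}$ has at most $(k+a-1)^{an} \le b$ proper colorings; enumerate them as $\pi_1, \dots, \pi_m$ (with $1 \le m \le b$), where $\pi_t$ assigns the color $c^t_i(v) \in P_i$ to the vertex $(v, x_i)$.

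Next I would fix distinct vertices $w_1, \dots, w_m \in B$ and set, for $t \in [m]$ and $v \in V(G)$,
\[ L_{w_t}(v) \;=\; M(v) \;\cup\; \{\, c^t_1(v), \dots, c^t_a(v) \,\}, \]
giving every copy $G_w$ with $w \in B \setminus \{w_1, \dots, w_m\}$ an arbitrary $(k+a-1)$-assignment. Because the colors $c^t_1(v), \dots, c^t_a(v)$ lie in the pairwise disjoint sets $P_1, \dots, P_a$ and avoid $Q \supseteq \bigcup_v M(v)$, they are $a$ distinct colors disjoint from $M(v)$, so $|L_{w_t}(v)| = (k-1)+a = k+a-1$, confirming $L$ is a genuine $(k+a-1)$-assignment. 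To see $G \square K_{a,b}$ is not $L$-colorable, suppose $f$ is a proper $L$-coloring. Its restriction to $V_{x_1} \cup \dots \cup V_{x_a}$ is one of the listed colorings, say $\pi_t$, so $f(v, x_i) = c^t_i(v)$ for all $i$ and $v$. Then for each $v$, the vertex $(v, w_t)$ is adjacent to $(v, x_1), \dots, (v, x_a)$, which carry the colors $c^t_1(v), \dots, c^t_a(v)$, so $f(v, w_t)$ must be one of the remaining colors of $L_{w_t}(v)$, namely a color of $M(v)$. Hence $f$ restricted to $G_{w_t} \cong G$ is a proper $M$-coloring of $G$, a contradiction.

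The only genuinely delicate point, and the one I would expect to be the crux, is the size bookkeeping in the middle step: one must guarantee that for \emph{every} proper coloring of the $A$-copies the set of colors it forbids along a single fiber $\{(v, w) : w \in B\}$ consists of exactly $a$ colors, none of them in the ``bad'' palette $Q$, for otherwise some $L_{w_t}(v)$ would be smaller than $k+a-1$ and $L$ would not be a legal $(k+a-1)$-assignment. Placing the $A$-copies on $a$ pairwise disjoint palettes $P_1, \dots, P_a$ takes care of this automatically, and the very crude resulting count --- at most $(k+a-1)^n$ colorings of each of $a$ independent copies of $G$ --- is exactly what forces the threshold $b \ge (k+a-1)^{an}$.
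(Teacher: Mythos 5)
Your construction is correct: the upper bound follows from Theorem~\ref{thm: Borow1} as you say, and your disjoint-palette assignment on the $a$-side copies together with the lists $M(v)\cup\{c^t_1(v),\dots,c^t_a(v)\}$ on $b$-side copies is exactly the kind of blocking argument behind this result (the paper itself cites it from~\cite{BJ06} without reproducing a proof, and your argument is essentially the original one, in the same spirit as the paper's ``bad coloring'' framework of Lemma~\ref{lem: badcolor}). The bookkeeping you flag as delicate is handled correctly by the disjointness of $P_1,\dots,P_a$ and $Q$, so no gap remains.
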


It is natural to wonder when the bound on $b$ in Theorem~\ref{thm: Borow2} is best possible.  With this in mind, for each $a \in \N$, we let $f_a(G)$ be the smallest $b$ such that $\chi_\ell(G \square K_{a,b}) = \chi_\ell(G) + a$.  Note $\chi_\ell(G \square K_{a,0}) = \chi_\ell(G) < \chi_\ell(G) + a$ which implies that $f_a(G) \geq 1$.  Second, Theorem~\ref{thm: Borow2} implies that $f_a(G) \leq (\chi_\ell(G) + a - 1)^{a|V(G)|}$.  This means $f_a(G)$ exists and is a natural number.  Also, if $G$ is a disconnected graph with components: $H_1, H_2, \dots, H_r$, we have $f_a(G) = \max \{f_a(H_i) \colon i \in [r] ,\chi_{\ell}(H_i) = \chi_{\ell}(G)\}$.  So, we will restrict our attention to connected graphs from this point forward.

\subsection{The List Color Function and Strong Chromatic-Choosability}

Let $P(G,k)$ be the \emph{chromatic polynomial} of the graph $G$; that is, $P(G,k)$ is equal to the number of proper $k$-colorings of $G$.  It is known that $P(G,k)$ is a polynomial in $k$ (see~\cite{B12}).  In 1990~\cite{AS90} this notion was extended to list coloring as follows. If $L$ is a list assignment for $G$, we use $P(G,L)$ to denote the number of proper $L$-colorings of $G$. The \emph{list color function} $P_\ell(G,k)$ is the minimum value of $P(G,L)$ where the minimum is taken over all possible $k$-assignments $L$ for $G$.  Since a $k$-assignment could assign the same $k$ colors to every vertex in a graph, it is clear that $P_\ell(G,k) \leq P(G,k)$ for each $k \in \N$.  In general, the list color function of a graph can differ significantly from its chromatic polynomial for small values of $k$.  However, for large values of $k$, Dong and Zhang~\cite{DZ22} (improving upon results in~\cite{D92}, \cite{T09}, and~\cite{WQ17}) showed that for any graph $G$ with at least 2 edges, $P_{\ell}(G,k)=P(G,k)$ whenever $k \geq |E(G)|-1$.

In the case $G$ is a complete graph or a cycle, it is well known (see~\cite{R68}) that $P(C_{n},k)=(k-1)^{n}+(-1)^n(k-1)$ and $P(K_n,k) = \prod_{i=0}^{n-1} (k-i)$.  It is easy to see that for each $n,k \in \N$, $P(K_n,k)=P_{\ell}(K_n,k)$, and it was shown in~\cite{KN16} that for each $n,k \in \N$, $P(C_n,k) = P_{\ell}(C_n,k)$.

A graph is \emph{$k$-vertex critical} if its chromatic number is $k$ and the removal of any vertex in the graph decreases the chromatic number of the graph.  In~\cite{KM18} the first and third named authors introduced the related notion of strong chromatic-choosability and used the list color function to compute $f_a$ with $a=1$ for graphs that are strongly chromatic-choosable (Theorem~\ref{thm: star} below).  A graph $G$ is \emph{strongly $k$-chromatic-choosable} if it is $k$-vertex critical and every $(k-1)$-assignment $L$ for which $G$ is not $L$-colorable has the property that the lists are the same on all vertices.  List assignments that assign the same list of colors to every vertex of a graph are called \emph{constant}.  We say $G$ is \emph{strongly chromatic-choosable} if it is strongly $\chi(G)$-chromatic-choosable. Note that if $G$ is strongly $k$-chromatic-choosable, then the only reason $G$ is not $(k-1)$-choosable is that a proper $(k-1)$-coloring of $G$ does not exist. Simple examples of strongly chromatic-choosable graphs include complete graphs, odd cycles, and the join of a complete graph and odd cycle (see~\cite{BK24} and~\cite{KM18} for many other examples).

\begin{thm} [\cite{KM18}] \label{thm: star}
Let $M$ be a strongly $k$-chromatic-choosable graph.  Then, $f_1(M) = P_\ell(M,k)$.
\end{thm}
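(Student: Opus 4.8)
The plan is to establish the two inequalities $f_1(M)\le P_\ell(M,k)$ and $f_1(M)\ge P_\ell(M,k)$ separately. Throughout, write $G=M\square K_{1,b}$, let $w_0$ be the center of $K_{1,b}$ and $w_1,\dots,w_b$ its leaves, so that $G$ is a central copy $G_{w_0}\cong M$ together with pendant copies $G_{w_1},\dots,G_{w_b}\cong M$, each $G_{w_i}$ joined to $G_{w_0}$ by the perfect matching pairing $(v,w_i)$ with $(v,w_0)$. First I would record that a strongly $k$-chromatic-choosable graph satisfies $\chi_\ell(M)=\chi(M)=k$: if some $k$-assignment $L$ admitted no proper $L$-coloring, then any $(k-1)$-assignment $L'$ obtained by shrinking the lists would still be non-$L'$-colorable, hence constant by hypothesis; but when $M$ has an edge one can choose such an $L'$ to be non-constant, a contradiction. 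The case $k=1$ (so $M=K_1$) is trivial, so assume $k\ge 2$. Since $\col(K_{1,b})=2$, Theorem~\ref{thm: Borow1} gives $\chi_\ell(G)\le\chi_\ell(M)+1=k+1$, while $G_{w_0}$ being a copy of $M$ forces $\chi_\ell(G)\ge k$; thus $\chi_\ell(G)\in\{k,k+1\}$ and $f_1(M)$ is the least $b$ with $\chi_\ell(G)=k+1$.

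For $f_1(M)\le P_\ell(M,k)$, fix a $k$-assignment $L^*$ for $M$ with $P(M,L^*)=P_\ell(M,k)=:p$ and let $f_1,\dots,f_p$ enumerate the proper $L^*$-colorings of $M$. Assuming $b\ge p$, I would build a $k$-assignment $L$ for $G$ by setting $L(v,w_0)=L^*(v)$ and, for each $j\in[p]$, choosing a set $C_j$ of $k-1$ colors disjoint from $\bigcup_v L^*(v)$ and putting $L(v,w_j)=C_j\cup\{f_j(v)\}$ (of size $k$ since $f_j(v)\notin C_j$); the remaining leaves receive arbitrary $k$-lists. If $\phi$ were a proper $L$-coloring of $G$, its restriction to $V_{w_0}$ would be a proper $L^*$-coloring of $M$, say $f_j$; then for every $v$ we would have $\phi(v,w_j)\in L(v,w_j)\setminus\{f_j(v)\}=C_j$, so $\phi$ restricted to $V_{w_j}$ would be a proper coloring of $G_{w_j}\cong M$ using only the $k-1$ colors of $C_j$, contradicting $\chi(M)=k$. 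Hence $\chi_\ell(G)=k+1$ whenever $b\ge p$, so $f_1(M)\le p=P_\ell(M,k)$.

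For $f_1(M)\ge P_\ell(M,k)$, suppose $b\le P_\ell(M,k)-1$, let $L$ be an arbitrary $k$-assignment for $G$, and show $G$ is $L$-colorable. Call a leaf $w_i$ a \emph{blocker} of a proper $L_{w_0}$-coloring $\psi$ of $G_{w_0}$ if the assignment $v\mapsto L(v,w_i)\setminus\{\psi(v)\}$ on $G_{w_i}$ is a constant $(k-1)$-assignment. The key lemma is that each leaf blocks at most one proper $L_{w_0}$-coloring: if $w_i$ blocks $\psi$ with constant value $C$ and $\psi'$ with constant value $C'$, then $L(v,w_i)=C\cup\{\psi(v)\}=C'\cup\{\psi'(v)\}$ for all $v$; if $C=C'$ then $\psi=\psi'$, and if $C\ne C'$ then $|C\cup C'|=k$ forces $L(v,w_i)=C\cup C'$ to be one fixed $k$-set for every $v$, making $\psi$ a constant — hence improper — coloring of $M$, a contradiction. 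Since $L_{w_0}$ is a $k$-assignment for $G_{w_0}\cong M$, there are $P(G_{w_0},L_{w_0})\ge P_\ell(M,k)>b$ proper $L_{w_0}$-colorings of $G_{w_0}$, and at most $b$ of them are blocked by some leaf, so I can pick one, $\phi_0$, blocked by no leaf. Color $G_{w_0}$ by $\phi_0$. For each leaf $w_i$, the assignment $v\mapsto L(v,w_i)\setminus\{\phi_0(v)\}$ has all lists of size $k-1$ or $k$ and is not a constant $(k-1)$-assignment, so (by the argument of the first paragraph) $G_{w_i}\cong M$ is colorable from it; since distinct leaf copies are non-adjacent and each such coloring avoids the matched vertex's color, assembling the extensions yields a proper $L$-coloring of $G$. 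Thus $\chi_\ell(G)=k$ for $b\le P_\ell(M,k)-1$, giving $f_1(M)\ge P_\ell(M,k)$, and combining the two bounds proves $f_1(M)=P_\ell(M,k)$.

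I expect the third paragraph to be the main obstacle. Two points there require care: identifying precisely when a pendant copy obstructs a given coloring of the central copy, which forces one to understand non-colorable list assignments on $M$ whose lists have size $k-1$ \emph{or} $k$ rather than exactly $k-1$; and proving the counting lemma that one leaf blocks at most one central coloring, which is exactly where both halves of strong $k$-chromatic-choosability — the constant-list property and $\chi(M)=k$ (so $M$ has an edge) — are genuinely used.
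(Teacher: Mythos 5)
Your proof is correct, and it follows essentially the same route as the source of this theorem (the paper only cites \cite{KM18} for it): the upper bound via a worst-case $k$-assignment on the central copy with fresh $(k-1)$-sets on $P_\ell(M,k)$ leaves, and the lower bound via the ``bad coloring'' framework the paper uses for general $a$ (Lemma~\ref{lem: badcolor} together with a counting argument showing each leaf obstructs at most one central coloring, which is the $a=1$ analogue of Lemmas~\ref{lem: scclower}--\ref{lem: gencolorbound}). The one step you only gesture at---that a non-colorable list assignment on $M$ with lists of size $k-1$ or $k$ must be a constant $(k-1)$-assignment---does follow from the non-constant shrinking argument you gave in your first paragraph, so no genuine gap remains.
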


\subsection{Motivating Question}

The following general upper bound on $f_a(G)$ was proven in~\cite{KM19}.

\begin{thm} [\cite{KM19}] \label{thm: generalupper}
For any graph $G$ and $a \in \N$, $f_a(G) \leq (P_\ell(G, \chi_\ell(G) + a - 1))^a$.
\end{thm}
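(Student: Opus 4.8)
The plan is to establish the two bounds $\chi_\ell(G \square K_{a,b}) \le \chi_\ell(G) + a$ and $\chi_\ell(G \square K_{a,b}) \ge \chi_\ell(G) + a$ whenever $b \ge (P_\ell(G,\chi_\ell(G)+a-1))^a$; together with the definition of $f_a$, these give the theorem. The upper bound holds for every $b$ and is already recorded in the excerpt as a consequence of Theorem~\ref{thm: Borow1}, so all the work lies in the lower bound. Since $\chi_\ell$ is monotone under subgraphs and $G \square K_{a,b'}$ is a subgraph of $G \square K_{a,b}$ for $b' \le b$, it is enough to produce a bad $s$-assignment for $G \square K_{a,N^a}$, where I write $k = \chi_\ell(G)$, $s = k+a-1$, and $N = P_\ell(G,s)$; that is, to show $G \square K_{a,N^a}$ is not $s$-choosable.

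First I would fix the two ingredients provided by the hypotheses: an $s$-assignment $L^{*}$ for $G$ attaining $P(G,L^{*}) = N$ (which exists since $P_\ell(G,s)$ is a minimum of nonnegative integers), and a $(k-1)$-assignment $L^{\dagger}$ for $G$ for which $G$ is not $L^{\dagger}$-colorable (which exists precisely because $\chi_\ell(G) = k$, so $G$ fails to be $(k-1)$-choosable). Let $A = \{\alpha_1,\dots,\alpha_a\}$ and $B$ be the partite sets of $K_{a,N^a}$, so that $G \square K_{a,N^a}$ splits into the pairwise non-adjacent copies $G_{\alpha_1},\dots,G_{\alpha_a}$ of $G$ together with $N^a$ further copies $G_\beta$, $\beta \in B$, and each vertex $(v,\beta)$ with $\beta \in B$ is adjacent, outside of $G_\beta$, to exactly $(v,\alpha_1),\dots,(v,\alpha_a)$. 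On the copy $G_{\alpha_i}$ I would place a relabelled copy $L_{\alpha_i}$ of $L^{*}$, choosing the relabellings so that the $a$ color sets $Q_i := \bigcup_v L_{\alpha_i}(v,\alpha_i)$ are pairwise disjoint and each is disjoint from $\bigcup_v L^{\dagger}(v)$; since recoloring by a bijection preserves the number of proper colorings, each $G_{\alpha_i}$ still has exactly $N$ proper $L_{\alpha_i}$-colorings. Enumerate that set as $\Psi_i$, and identify $B$ with $\prod_{i=1}^{a}\Psi_i$, so $|B| = N^a$. For $\beta = (\psi_1,\dots,\psi_a) \in B$, set $L_\beta(v,\beta) = L^{\dagger}(v) \cup \{\psi_1(v,\alpha_1),\dots,\psi_a(v,\alpha_a)\}$ for each $v$; the disjointness of the palettes makes this set have exactly $(k-1)+a = s$ elements, so the combined assignment $L$ for $G \square K_{a,N^a}$ is an $s$-assignment.

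It then remains to check that $G \square K_{a,N^a}$ has no proper $L$-coloring. Given a putative proper $L$-coloring $f$, its restriction to each $G_{\alpha_i}$ is one of the $N$ colorings in $\Psi_i$, say $\psi_i$; put $\beta = (\psi_1,\dots,\psi_a)$. For each $v$, the vertex $(v,\beta)$ is adjacent to all of $(v,\alpha_1),\dots,(v,\alpha_a)$, so $f(v,\beta)$ must avoid $\psi_1(v,\alpha_1),\dots,\psi_a(v,\alpha_a)$ and hence lies in $L_\beta(v,\beta) \setminus \{\psi_1(v,\alpha_1),\dots,\psi_a(v,\alpha_a)\} = L^{\dagger}(v)$. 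Thus $f$ restricted to $G_\beta$ is a proper $L^{\dagger}$-coloring of $G$, contradicting the choice of $L^{\dagger}$. This yields $\chi_\ell(G \square K_{a,N^a}) \ge s+1 = k+a$, and subgraph monotonicity extends the lower bound to all $b \ge N^a$, so $f_a(G) \le N^a = (P_\ell(G,\chi_\ell(G)+a-1))^a$.

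I expect the only real subtlety to be the bookkeeping that makes the forbidden colors coming from $G_{\alpha_1},\dots,G_{\alpha_a}$ exactly "fill the gap" between $|L^{\dagger}(v)| = k-1$ and $s$: this is exactly what forces the use of $a$ mutually disjoint fresh palettes on the $A$-side, and it is also the reason the bound on $b$ is $N^a$ (one $B$-copy per choice of the independently picked colorings of $G_{\alpha_1},\dots,G_{\alpha_a}$) rather than something larger. No genuinely hard analytic or combinatorial step arises here; the construction simultaneously generalizes the tightness construction behind Theorem~\ref{thm: Borow2} and the classical fact that $\chi_\ell(K_{a,b}) = 1+a$ once $b \ge a^a$ (the case $G = K_1$, where $N = a$).
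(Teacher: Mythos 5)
Your proposal is correct, and it is essentially the argument of the cited source \cite{KM19} (the present paper only quotes this theorem, it does not reprove it): place colour-disjoint relabelled copies of a $(\chi_\ell(G)+a-1)$-assignment minimizing the number of proper colorings on the $a$ copies of $G$ indexed by the small part, and give the copy of $G$ indexed by each of the $N^a$ vertices of the large part a list consisting of a bad $(\chi_\ell(G)-1)$-assignment together with the $a$ colours used by the corresponding tuple of colorings, which forces the contradiction exactly as you describe. The bookkeeping details you flag (disjoint palettes, exact list sizes $k-1+a$, and the count $N^a$) are all handled correctly, so no changes are needed.
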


Notice Theorem~\ref{thm: star} shows the bound in Theorem~\ref{thm: generalupper} is tight when $a=1$ and $G$ is strongly chromatic-choosable.  However, it is not the case that $f_a(G) = (P_\ell(G, \chi_\ell(G) + a - 1))^a$ for all graphs $G$ and $a \in \N$ since it is easy to see that $f_1(C_{2n+2})=1$, yet $P_\ell(C_{2n+2}, 2)=2$.  This observation leads to the following open question.

\begin{ques} [\cite{KM19}] \label{ques: uppertight}
For what graphs does $f_a(G) = (P_\ell(G, \chi_\ell(G) + a - 1))^a$ for each $a \in \N$?
\end{ques}

In~\cite{KM19} some partial progress was made on Question~\ref{ques: uppertight} specifically in the case where our attention is restricted to strongly chromatic-choosable graphs.

\begin{thm} [\cite{KM19}] \label{thm: sccexact}
If $M$ is strongly chromatic-choosable and $\chi(M) \geq a + 1$, then $f_a(M) = (P_\ell(M, \chi_\ell(M) + a - 1))^a.$
\end{thm}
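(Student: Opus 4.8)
The plan is to establish the two bounds $f_a(M)\le(P_\ell(M,k+a-1))^a$ and $f_a(M)\ge(P_\ell(M,k+a-1))^a$ separately, where $k$ denotes the common value of $\chi(M)$ and $\chi_\ell(M)$ (these are equal since $M$ is strongly chromatic-choosable). The first bound is immediate from Theorem~\ref{thm: generalupper}. For the second, set $P:=P_\ell(M,k+a-1)$; it suffices to fix an integer $b$ with $1\le b\le P^a-1$ and a $(k+a-1)$-assignment $L$ of $G:=M\square K_{a,b}$ and produce a proper $L$-colouring. Write $A=\{x_1,\dots,x_a\}$ and $B=\{y_1,\dots,y_b\}$ for the parts of $K_{a,b}$.

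I will colour the pairwise nonadjacent copies $G_{x_1},\dots,G_{x_a}$ of $M$ first and then extend over each $G_{y_j}$ independently. Let $\Phi$ be the set of tuples $\phi=(f_1,\dots,f_a)$ with $f_i$ a proper $L_{x_i}$-colouring of $G_{x_i}$; since each $L_{x_i}$ is a $(k+a-1)$-assignment of a copy of $M$, we get $|\Phi|=\prod_{i=1}^a P(G_{x_i},L_{x_i})\ge P^a>b$. For $\phi\in\Phi$ and $j\in[b]$, every neighbour of $(u,y_j)$ outside $G_{y_j}$ is one of $(u,x_1),\dots,(u,x_a)$, so $\phi$ extends over $G_{y_j}$ exactly when the residual assignment $R^{\phi,j}(u):=L((u,y_j))\setminus\{f_1(u),\dots,f_a(u)\}$ (all of whose lists have size $\ge k-1$) properly colours $G_{y_j}\cong M$. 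A short lemma — using that $M$ is strongly $k$-chromatic-choosable and $|V(M)|\ge 2$ — shows that $M$ is colourable from any assignment whose lists all have size $\ge k-1$ \emph{unless} that assignment is a constant $(k-1)$-assignment; call $\phi$ \emph{$j$-bad} when $R^{\phi,j}$ is such a constant $(k-1)$-assignment. Because the $G_{y_j}$ are pairwise nonadjacent, any $\phi$ that is $j$-bad for no $j$ extends to a proper $L$-colouring of $G$, so it suffices to show that, for each $j$, at most one $\phi\in\Phi$ is $j$-bad: then at most $b<P^a\le|\Phi|$ members of $\Phi$ are excluded and a good $\phi$ survives.

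To prove this I split into two cases. If some vertex $u$ and indices $i\ne i'$ satisfy $L((u,x_i))\cap L((u,x_{i'}))\ne\emptyset$, I instead produce a good $\phi$ directly: choosing a common colour $c$ and forcing $f_i(u)=f_{i'}(u)=c$, one can complete $\phi$ because $M-u$ is $k$-choosable and, as $a\ge 2$, the reduced lists on $N_M(u)$ still have size $\ge k$; then $|R^{\phi,j}(u)|\ge(k+a-1)-(a-1)=k>k-1$ for every $j$, so $\phi$ is $j$-bad for no $j$. Otherwise $L((u,x_1)),\dots,L((u,x_a))$ are pairwise disjoint for every $u$, so every $\phi\in\Phi$ automatically has $f_1(u),\dots,f_a(u)$ distinct. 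If such a $\phi$ is $j$-bad with common residual list $T$ (a $(k-1)$-set contained in every $L((u,y_j))$), then $\{f_1(u),\dots,f_a(u)\}=L((u,y_j))\setminus T$ meets each $L((u,x_i))$ in exactly one point, so $f_i(u)$ is forced and $\phi$ is completely determined by $T$. Finally, suppose $T\ne T'$ both arose in this way, with associated tuples $\phi$ and $\phi'$. Picking $c\in T\setminus T'$ forces $c\in L((u,y_j))\setminus T'=\{f_1'(u),\dots,f_a'(u)\}$ for every $u$; by disjointness there is a unique index $i(u)$ with $f_{i(u)}'(u)=c$, and since each $f_i'$ is a proper colouring of $M$ the classes $\{u:i(u)=i\}$ partition $V(M)$ into at most $a$ independent sets, giving $\chi(M)\le a$ and contradicting $\chi(M)\ge a+1$. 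Hence $T$, and so the $j$-bad $\phi$, is unique, which completes the proof.

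The crux is this last step. It is genuinely false in general that a single $y_j$ can block at most one colouring of the $A$-side — already for $M=K_3$ and $a=2$ one can choose a $(k+a-1)$-assignment and a column with two $j$-bad tuples — so the split into the "pairwise disjoint lists" case (in which $j$-badness rigidly prescribes every value $f_i(u)$) and the use of $\chi(M)\ge a+1$ to rule out two distinct blocking sets $T$ are both essential. The remaining ingredients (the colourability lemma for strongly chromatic-choosable graphs, the count $|\Phi|\ge P^a$, and the independence of the copies $G_{y_j}$) are routine.
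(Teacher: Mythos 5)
Your proof is correct and follows essentially the same route as the original argument in~\cite{KM19} (the present paper only cites this theorem): reduce to the case where $L(v_i,x_1),\dots,L(v_i,x_a)$ are pairwise disjoint as in Lemma~\ref{lem: disjointlists}, use the bad-colouring correspondence of Lemma~\ref{lem: badcolor} together with the count $\card{\mathcal{C}_X}\ge P_\ell(M,k+a-1)^a$, and show each $H_{y_j}$ admits at most one bad colouring, with $\chi(M)\ge a+1$ ruling out two distinct residual $(k-1)$-sets exactly as in the cited proof. Your observation that the disjointness reduction and the hypothesis $\chi(M)\ge a+1$ are what force uniqueness is precisely the point this paper generalizes in Lemma~\ref{lem: geninjcolor}, where the $(n-1)$-to-1 condition plays the role your chromatic-number argument plays here.
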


It is unknown whether there are any strongly chromatic-choosable graphs $M$ for which $f_a(M) < (P_\ell(M, \chi_\ell(M) + a - 1))^a.$  Consequently, the following question, which was presented by the third named author at the 2024 Sparse Graphs Coalition's Workshop on algebraic, extremal, and structural methods and problems in graph colouring~\cite{M24}, is open and served as the main motivation for this paper.

\begin{ques} [\cite{KM19, M24}] \label{ques: complete}
Is it the case that $f_a(K_n) = (P_{\ell}(K_n,n+a-1))^a = \left( \frac{(n+a-1)!}{(a-1)!} \right)^a$ for each $n, a \in \N$?
\end{ques}

In what follows we show that the answer to this question is yes.  In Subsection~\ref{reduce} we present several important lemmas and observations that are used in the proof of our main result.  Importantly, the results in Subsection~\ref{reduce} apply to all strongly chromatic-choosable graphs; so, they may be of independent interest since they could be used to explore whether all strongly chromatic-choosable graphs satisfy the condition in Question~\ref{ques: uppertight}.  After proving several technical inequalities, which include the use of the AM-GM inequality and Karamata's Inequality, in Subsection~\ref{inequal}, we complete the proof of our main result in Subsection~\ref{main} which we now state.

\begin{thm} \label{thm: complete}
For each $n, a \in \N$, $\chi_{\ell}(K_n \square K_{a,b}) = n+a$ if and only if $b \geq (n+a-1)!^a/(a-1)!^a$. That is, $f_a(K_n) =   \left( \frac{(n+a-1)!}{(a-1)!} \right)^a$ for each $n, a \in \N$.
\end{thm}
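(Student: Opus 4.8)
The plan is to establish the two directions separately, with the upper bound on $f_a(K_n)$ coming essentially for free and the lower bound requiring the bulk of the work. For the easy direction, note that Theorem~\ref{thm: generalupper} applied to $G = K_n$ gives $f_a(K_n) \leq (P_\ell(K_n, n+a-1))^a$, and since complete graphs are chromatic-choosable we have $P_\ell(K_n, n+a-1) = P(K_n, n+a-1) = \prod_{i=0}^{n-1}(n+a-1-i) = (n+a-1)!/(a-1)!$. Thus $\chi_\ell(K_n \square K_{a,b}) = n+a$ whenever $b \geq (n+a-1)!^a/(a-1)!^a$. It remains to prove the converse: if $b < (n+a-1)!^a/(a-1)!^a$, then $\chi_\ell(K_n \square K_{a,b}) \leq n+a-1$, i.e., every $(n+a-1)$-assignment $L$ for $K_n \square K_{a,b}$ admits a proper $L$-coloring.

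First I would set up the structural reduction in Subsection~\ref{reduce}. Write $G = K_n \square K_{a,b}$, let $A$ and $B$ be the partite sets of $K_{a,b}$ with $|A| = a$, $|B| = b$, and let $L$ be an $(n+a-1)$-assignment. Each copy $G_v$ for $v \in A$ is a copy of $K_n$; color these $a$ copies greedily/arbitrarily in order, using the fact that at each step a vertex in a $K_n$-copy has seen at most $n-1$ (from within the copy) plus at most $a-1$ (from the already-colored $A$-copies in the same $B$-fiber direction) $=n+a-2 < n+a-1$ forbidden colors. Actually the cleaner route, and the one the paper signals by invoking strong chromatic-choosability, is: the copies $G_v$, $v \in A$, together span $K_n \square K_a$, which has coloring number $n+a-1$, so any $(n+a-1)$-assignment colors $G_A$; fix such a coloring $\phi$. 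Then for each $u \in B$, the copy $G_u \cong K_n$ must be colored from lists $L_u$ with the added constraint that the color at $(w, u)$ avoids the $a$ colors $\phi$ assigns to $(w, v)$ over $v \in A$. So the number of ways to extend $\phi$ to the $B$-fiber over $u$ is $P(K_n, L_u^\phi)$ where $L_u^\phi$ shrinks each list by the (at most $a$) forbidden colors coming from $\phi$. The key counting lemma I expect from Subsection~\ref{reduce} — valid for any strongly chromatic-choosable $H$ in place of $K_n$ — is that one can choose $\phi$ on $G_A$ so that for every $u \in B$ the "cost" $c(u) := (\text{number of bad colorings of } G_u)$ is controlled, and that $G$ is $L$-colorable unless $\prod_{u \in B}$ (something) forces too many constraints. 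Concretely: $G$ fails to be $L$-colorable only if for every proper $L_A$-coloring $\phi$, some $B$-fiber cannot be extended; counting shows the total number of proper $L$-colorings is at least $\sum_\phi \prod_{u \in B}(\text{extensions of } \phi \text{ over } u)$, and the heart of the matter is to show this sum is positive when $b$ is below the stated threshold.

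The main obstacle — and where Subsection~\ref{inequal} with Karamata's inequality enters — is the following optimization. After fixing the coloring $\phi$ of the $A$-copies, color the $B$-copies one at a time; when we reach $G_u$ we have a copy of $K_n$ whose lists have been reduced by the $a$ colors from $\phi$ on the same row. If the reduced lists have sizes summing appropriately, the number of proper colorings of this $K_n$ is bounded below by a product, and the "worst case" over $\phi$ is what we must bound. The inequality to prove is roughly: for nonnegative reals constrained by the list sizes, a certain product of "number of available colors" expressions, minimized over adversarial list choices, is at least $b$ whenever $b < (n+a-1)!^a/(a-1)!^a$ — equivalently, that the adversary needs $b \geq (n+a-1)!^a/(a-1)!^a$ copies in $B$ to block all extensions. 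This is where one shows that the extremal (hardest) list assignment is, up to symmetry, the "generic" one coming from a product construction on $K_n \square K_{a,b}$ built out of the extremal list assignment witnessing $P_\ell(K_n, n+a-1) = (n+a-1)!/(a-1)!$ — and the convexity/majorization machinery (AM-GM for the baseline and Karamata to rule out that spreading out the list overlaps unevenly does better for the adversary) pins the minimum of the relevant product exactly at $(n+a-1)!^a/(a-1)!^a$. I would organize this as: (i) reduce to a clean inequality about products of linear forms via the counting lemma; (ii) prove the inequality for $K_n$ specifically by first handling the symmetric case with AM-GM, then using Karamata to show asymmetry only helps us; (iii) assemble: $b < (n+a-1)!^a/(a-1)!^a$ makes the product strictly exceed $b$, so by pigeonhole some $\phi$ extends to all $B$-fibers, giving a proper $L$-coloring and hence $\chi_\ell(G) \leq n+a-1$. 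The delicate point throughout is bookkeeping the reduced list sizes over $B$-fibers uniformly in $\phi$, which is exactly why the strong chromatic-choosability of $K_n$ (forcing the only obstructions to be constant lists) is needed to control the structure of the worst-case $L$.
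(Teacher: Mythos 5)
Your upper-bound direction is fine (Theorem~\ref{thm: generalupper} together with $P_\ell(K_n,k)=P(K_n,k)$), and your reduction of the converse to ``some proper $L_X$-coloring of the copies of $K_n$ over $X$ extends across every fiber over $Y$'' is exactly the paper's Lemma~\ref{lem: badcolor}. The gap is in how you propose to show that $b < \left(\frac{(n+a-1)!}{(a-1)!}\right)^a$ fibers cannot block all such colorings. Note that $H_X$ is a \emph{disjoint} union of $a$ copies of $K_n$ (the vertices of $X$ are pairwise nonadjacent), so in the worst case the number of proper $L_X$-colorings is exactly $P_\ell(K_n,n+a-1)^a$, i.e.\ exactly the target threshold, while a single fiber $H_{y_l}$ can a priori be bad for up to $2^{n-1}$ distinct $L_X$-colorings (Lemma~\ref{lem: scclower} is the only general per-fiber bound). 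Consequently, any argument that only bounds how many colorings each fiber blocks --- or, as in your sketch, minimizes per-fiber extension counts over adversarial lists via AM-GM/Karamata after fixing $\phi$ --- loses a factor of up to $2^{n-1}$ and yields only $f_a(K_n)\geq P_\ell(K_n,n+a-1)^a/2^{n-1}$ (essentially the bound already known from prior work). Your proposal contains no mechanism for recovering that factor, and ``the extremal $L$ is the generic product construction'' is asserted rather than proved.

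The missing idea, which is the paper's main new contribution, is a dichotomy among $L_X$-colorings: call a coloring $(n-1)$-to-1 if no color is used $n$ times. Lemma~\ref{lem: geninjcolor} shows that if a fiber is bad for an $(n-1)$-to-1 coloring, then that is the \emph{only} coloring it is bad for; combined with Lemma~\ref{lem: scclower} this gives the refined blocking inequality $b \geq \card{\mathcal{I}_X} + (\card{\mathcal{C}_X}-\card{\mathcal{I}_X})/2^{n-1}$ of Lemma~\ref{lem: gencolorbound}. The bulk of the work is then lower-bounding $\card{\mathcal{C}_X}$ and, much harder, the number $\card{\mathcal{I}_X}$ of $(n-1)$-to-1 colorings: the paper tracks which colors can occur $n$ times (the indicators $s_q$), builds an auxiliary graph $M_{\mathbf{q}}$ whose proper colorings are automatically $(n-1)$-to-1 (Lemmas~\ref{lem: coloringcorrespondence} and~\ref{lem: muchhardercoloringbound}), and only then applies Karamata and AM-GM (Lemmas~\ref{lem: optlemma}--\ref{lem: nis2}) to prove the key inequality~(\ref{key}); so the majorization machinery compares $\card{\mathcal{I}_X}$ and $\card{\mathcal{C}_X}$ against the threshold within this trade-off, not the post-$\phi$ extension counts you describe. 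Two further points your outline glosses over: the entire count requires first reducing to assignments where $L(v_i,x_1),\dots,L(v_i,x_a)$ are pairwise disjoint for each $i$ (Lemma~\ref{lem: disjointlists} and Observation~\ref{obs: disjointchoosable}), and the case $n\geq a+1$ is dispatched separately by Theorem~\ref{thm: sccexact}, since the final inequalities are proved only for $2\leq n\leq a$.
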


It is worth mentioning that when $n=1$,  Theorem~\ref{thm: complete} says $\chi_{\ell}(K_{a,b}) = \chi_{\ell}(K_1 \square K_{a,b}) = 1+a$ if and only if $b \geq a^a$ which is a well-known list coloring result. 

\section{Proof of Theorem~\ref{thm: complete}}

We now introduce some notation and terminology that will be used for the remainder of this paper. Suppose $a, b$, and $k \geq 2$ are positive integers and $M$ is a strongly $k$-chromatic-choosable graph. Suppose $H = M \square K_{a,b}$, $V(M) = \{v_1, \dots, v_n \}$, and the partite sets of the copy of $K_{a,b}$ used to form $H$ are $X = \{x_1, \dots, x_a \}$ and $Y = \{y_1, \dots, y_b \}$. Suppose $L$ is an arbitrary $(k+a-1)$-assignment for $H$.

Notice that for Theorem~\ref{thm: complete} we are specifically interested in the case in which $M = K_n$ for some $n \geq 2$. Note that $K_n$ is a strongly $n$-chromatic-choosable graph. If $n \geq a + 1$, $f_a(K_n) = \left( \frac{(n+a-1)!}{(a-1)!} \right)^a$ by Theorem~\ref{thm: sccexact}. Therefore, to prove Theorem~\ref{thm: complete}, we may suppose from this point forward that $n \leq a$ when $M = K_n$. Also, notice that by Theorem~\ref{thm: generalupper}, $f_a(K_n) \leq \left( \frac{(n+a-1)!}{(a-1)!} \right)^a$. So, proving Theorem~\ref{thm: complete} amounts to showing that if $b < \left( \frac{(n+a-1)!}{(a-1)!} \right)^a$, then $H$ is $(n+a-1)$-choosable.

\subsection{Important Tools} \label{reduce}

We begin by pointing out certain conditions on $L$ for which it is easy to construct a proper $L$-coloring of $H$.

\begin{lem} [\cite{KM19}] \label{lem: disjointlists}
Suppose $M$ is a strongly $k$-chromatic-choosable graph and $H= M \square K_{a,b}$.  Suppose that $L$ is a $(k+a-1)$-assignment for $H$ such that there exist $l, i,$ and $j$ with $i \neq j$ and $L(v_l, x_i) \cap L(v_l, x_j) \neq \emptyset$.  Then, there is a proper $L$-coloring of $H$.
\end{lem}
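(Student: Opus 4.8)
The plan is to construct a proper $L$-coloring of $H$ by first coloring the $X$-fibers in a way that exploits the overlap $L(v_l,x_i) \cap L(v_l,x_j) \neq \emptyset$, and then extending across the $Y$-fibers. For a vertex $w$ of $K_{a,b}$ write $H_w = H[V_w]$, a copy of $M$. These copies interact mildly: there is no edge between $H_{x_i}$ and $H_{x_{i'}}$ for $i \neq i'$, no edge between $H_{y_{j'}}$ and $H_{y_{j''}}$ for $j' \neq j''$, and the only neighbours of a vertex $(v_m,y_{j'})$ outside $H_{y_{j'}}$ are $(v_m,x_1), \dots, (v_m,x_a)$. Two preliminary remarks (recall $k \geq 2$): since the hypothesis furnishes $i \neq j$ in $[a]$, we have $a \geq 2$; and a strongly $k$-chromatic-choosable graph is $k$-choosable — given a $k$-assignment, delete one color from each list so that the resulting $(k-1)$-assignment is non-constant (possible since $M$ has at least two vertices and $k \geq 2$), then apply the definition — so $\chi_\ell(M) = k$, and by monotonicity of $\chi_\ell$ under taking subgraphs, $M - v_l$ is $k$-choosable as well.

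Stage 1: fix $c \in L(v_l,x_i) \cap L(v_l,x_j)$ and set $f(v_l,x_i) = f(v_l,x_j) = c$. Delete $c$ from every list of $H_{x_i} - (v_l,x_i) \cong M - v_l$; the lists still have size at least $k+a-2 \geq k$ and $M - v_l$ is $k$-choosable, so $f$ extends to a proper $L$-coloring of $H_{x_i}$ that avoids $c$ on every other vertex. The same applies to $H_{x_j}$. Stage 2: for each $i' \in [a] \setminus \{i,j\}$, color $H_{x_{i'}} \cong M$ from its restricted list assignment, whose lists have size $k+a-1 \geq k = \chi_\ell(M)$. Since $H_{x_1}, \dots, H_{x_a}$ are pairwise non-adjacent, the colorings of Stages 1 and 2 together form a proper $L$-coloring $f$ of all the $X$-fibers.

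Stage 3 is the crux: extend $f$ to each $H_{y_{j'}} \cong M$. Because the $H_{y_{j'}}$ are pairwise non-adjacent and the only neighbours of $H_{y_{j'}}$ outside itself are already-colored vertices of $X$-fibers, it is enough to color $H_{y_{j'}}$ from the reduced assignment $L'(v_m,y_{j'}) = L(v_m,y_{j'}) \setminus \{f(v_m,x_1), \dots, f(v_m,x_a)\}$. For $m \neq l$ this set has size at least $k-1$, while over $v_l$ at most $a-1$ colors are removed (as $f(v_l,x_i) = f(v_l,x_j) = c$), so $|L'(v_l,y_{j'})| \geq k$. Now truncate $L'$ to a $(k-1)$-assignment $L''$ on $H_{y_{j'}}$; if $L''$ is constant, replace $L''(v_l,y_{j'})$ by a different $(k-1)$-subset of $L'(v_l,y_{j'})$ — available since $|L'(v_l,y_{j'})| \geq k$ — which makes $L''$ non-constant because some other vertex of $M$ still has its original list. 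By strong $k$-chromatic-choosability $H_{y_{j'}}$ has a proper $L''$-coloring, hence a proper $L'$-coloring; taking these colorings for all $j' \in [b]$ together with $f$ yields a proper $L$-coloring of $H$.

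I expect the main obstacle to be Stage 3, which is exactly what dictates the form of the hypothesis: on a $Y$-indexed copy of $M$ the reduced lists of typical vertices shrink to size $k-1$, precisely the regime in which a strongly $k$-chromatic-choosable graph can fail to be colorable — and it fails only for constant assignments. Forcing the repeated color $c$ leaves one list of size $k$ on that copy, which is just enough slack to re-truncate away from constancy. Stages 1 and 2 only spend the surplus of colors on the $X$-fibers; the one point worth checking there is that $M - v_l$ inherits $k$-choosability from $M$.
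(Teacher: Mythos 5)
Your proof is correct. The paper does not prove this lemma itself (it is quoted from \cite{KM19}), and your argument — giving the repeated color $c$ to $(v_l,x_i)$ and $(v_l,x_j)$, coloring the $X$-fibers using the surplus list sizes, and then observing that each $Y$-copy of $M$ inherits lists of size at least $k-1$ with the list at $v_l$ of size at least $k$, so one can truncate to a non-constant $(k-1)$-assignment and invoke strong $k$-chromatic-choosability — is essentially the same approach as the cited proof.
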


From Lemma~\ref{lem: disjointlists} we immediately get the following observation.

\begin{obs} \label{obs: disjointchoosable}
Suppose $M$ is a strongly $k$-chromatic-choosable graph and $H= M \square K_{a,b}$.  Suppose that $L$ is a $(k+a-1)$-assignment for $H$ such that the lists $L(v_i,x_1), \dots, L(v_i,x_a)$ are pairwise disjoint for each $i \in [n]$. If $H$ is $L$-colorable for any such $L$, then $H$ is $(k+a-1)$-choosable.
\end{obs}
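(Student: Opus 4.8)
The plan is to obtain this as an immediate consequence of the contrapositive of Lemma~\ref{lem: disjointlists}. First I would fix an arbitrary $(k+a-1)$-assignment $L'$ for $H$ and argue by contradiction, supposing that $H$ is not $L'$-colorable. Lemma~\ref{lem: disjointlists} says that whenever there exist $l, i, j$ with $i \neq j$ and $L'(v_l, x_i) \cap L'(v_l, x_j) \neq \emptyset$, the graph $H$ has a proper $L'$-coloring; so under our assumption no such triple $l, i, j$ exists. Rephrasing, for every $i \in [n]$ the lists $L'(v_i, x_1), \dots, L'(v_i, x_a)$ are pairwise disjoint, which means $L'$ is exactly a list assignment of the special form appearing in the hypothesis of the observation.

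At that point I would invoke the standing hypothesis — that $H$ is $L$-colorable for \emph{every} $(k+a-1)$-assignment $L$ whose lists $L(v_i, x_1), \dots, L(v_i, x_a)$ are pairwise disjoint for each $i \in [n]$ — to conclude that $H$ is $L'$-colorable, contradicting our supposition. Hence every $(k+a-1)$-assignment for $H$ admits a proper coloring, i.e., $H$ is $(k+a-1)$-choosable. There is no real obstacle to overcome here; the only point to be careful about is reading the hypothesis as a statement quantified over all such $L$, which is precisely what allows the applied instance $L = L'$ and makes the contradiction close. The value of the observation is organizational: it reduces the task of proving choosability of $H$ to handling only those list assignments in which the $X$-side lists are pairwise disjoint in each row, and this is the reduction that the subsequent arguments will exploit.
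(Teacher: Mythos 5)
Your argument is correct and is exactly the paper's (implicit) reasoning: the Observation is stated as an immediate consequence of Lemma~\ref{lem: disjointlists}, namely that any $(k+a-1)$-assignment either has an overlap $L(v_l,x_i)\cap L(v_l,x_j)\neq\emptyset$ (and is then colorable by the lemma) or has pairwise disjoint lists on each $V_{v_i}\cap V_X$ row (and is then colorable by the standing hypothesis). No difference in approach worth noting.
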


So, Theorem~\ref{thm: complete} reduces to proving that there is a proper $L$-coloring of $H$ when $M = K_n$, $b < \left( \frac{(n+a-1)!}{(a-1)!} \right)^a$, and the lists $L(v_i, x_1), \dots, L(v_i, x_a)$ are pairwise disjoint for all $i \in [n]$. From now on we will assume that the lists $L(v_i, x_1), \dots, L(v_i, x_a)$ are pairwise disjoint for all $i \in [n]$.

Recall that for any $u \in V(M)$ (resp. $u \in X \cup Y$), we let $V_u$ be the subset of $V(H)$ consisting of the vertices with first (resp. second) coordinate $u$.  We also let $H_u = H[V_u]$.  Similarly, if $S \subseteq V(M) \cup (X \cup Y)$, we let $V_S = \bigcup_{s \in S} V_s$, $H_S = H[V_S]$, and $L_S$ be the list assignment for $H_S$ obtained by restricting the domain of $L$ to $V_S$.

Let $f$ be a proper $L_X$-coloring of $H_X$. We say $f$ is a \emph{bad coloring for $H_{y_i}$} if there is no proper $L'$-coloring for $H_{y_i}$ where $L'$ is the list assignment for $H_{y_i}$ given by $L'(v_j,y_i) = L_{y_i}(v_j,y_i) - \{f(v_j, x_l) : l \in [a] \}$ for each $j \in [n]$. Additionally, we say that $f$ is an \textit{$\mathit{(n-1)}$-to-1 coloring} if for each color $q$ in the range of $f$, $\card{f^{-1}(q)} \leq n - 1$.

Our next lemma relates the notion of bad coloring to the existence of a proper $L$-coloring of $H$.

\begin{lem} [\cite{KM19}] \label{lem: badcolor}
Suppose $H = M \square K_{a,b}$ with $a,b \in \N$ and $L$ is a list assignment for $H$.  Suppose $\mathcal{C}_X$ is the set of all proper $L_X$-colorings of $H_X$. For each $f \in \mathcal{C}_X$ there exists an $l \in [b]$ such that $f$ is a bad coloring for $H_{y_l}$ if and only if there is no proper $L$-coloring of $H$.
\end{lem}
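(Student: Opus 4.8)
\medskip
\noindent \textbf{Proof proposal.} The plan is a direct structural argument; no analytic input is needed at this stage. I would first record the two facts about $H = M \square K_{a,b}$ that drive everything. Since $X$ and $Y$ are the partite sets of a complete \emph{bipartite} graph, $H$ has no edge between $V_{y_l}$ and $V_{y_{l'}}$ when $l \neq l'$, and the only edges of $H$ joining $V_X$ to $V_{y_l}$ are the ``vertical'' edges $(v_j, x_i)(v_j, y_l)$ with $j \in [n]$ and $i \in [a]$. Hence a coloring $h$ of $H$ is proper if and only if its restriction to $H_X$ is proper, its restriction to each $H_{y_l}$ is proper, and $h(v_j, y_l) \notin \{h(v_j, x_i) : i \in [a]\}$ for every $j$ and $l$; and $h$ uses colors from the lists $L$ exactly when each of these restrictions uses colors from the corresponding restriction of $L$.

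For the ``if'' part I would prove the contrapositive. Assume some $f \in \mathcal{C}_X$ is not a bad coloring for any $H_{y_l}$. Then for each $l \in [b]$ there is a proper coloring $g_l$ of $H_{y_l}$ with $g_l(v_j, y_l) \in L_{y_l}(v_j, y_l) - \{f(v_j, x_i) : i \in [a]\}$ for all $j \in [n]$. Let $h$ agree with $f$ on $V_X$ and with $g_l$ on $V_{y_l}$ for each $l$. By the structural facts above, and since $f$ is proper on $H_X$ and each $g_l$ is proper on $H_{y_l}$, the only remaining properness conditions are on the vertical edges, and these hold by the choice of the $g_l$; moreover $h$ respects $L$ because $f$ respects $L_X$ and each $g_l$ respects a sublist of $L_{y_l}$. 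So $H$ is $L$-colorable.

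For the ``only if'' part I would again argue contrapositively: let $h$ be a proper $L$-coloring of $H$ and let $f$ be its restriction to $V_X$, which lies in $\mathcal{C}_X$. For an arbitrary $l \in [b]$, the restriction $g$ of $h$ to $V_{y_l}$ is a proper coloring of $H_{y_l}$ with $g(v_j, y_l) \in L_{y_l}(v_j, y_l)$, and properness of $h$ on the vertical edges gives $g(v_j, y_l) \neq f(v_j, x_i)$ for all $i$; so $g$ is a proper coloring of $H_{y_l}$ from the $f$-reduced lists, i.e. $f$ is not bad for $H_{y_l}$. As $l$ was arbitrary, this one $f$ refutes the assertion ``for each $f$ there is an $l$''.

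There is no real obstacle here; the lemma is essentially the Cartesian product structure repackaged. The single point worth stressing is that $Y$ is an independent set of $K_{a,b}$, so the fibers $H_{y_l}$ are pairwise non-adjacent and may be colored independently of one another once $f$ is fixed on $V_X$. That is exactly why $L$-colorability of $H$ is equivalent to the existence of an $f \in \mathcal{C}_X$ that is simultaneously not bad for \emph{every} single $H_{y_l}$, rather than to some more intricate joint condition across the fibers.
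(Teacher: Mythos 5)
Your proof is correct. The paper does not reprove this lemma (it is imported from~\cite{KM19}), and your argument --- decomposing $H$ into $H_X$, the pairwise nonadjacent fibers $H_{y_1},\dots,H_{y_b}$, and the vertical edges $(v_j,x_i)(v_j,y_l)$, then handling both directions by passing to contrapositives --- is exactly the standard argument behind the cited result, so there is nothing to add.
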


Let $\mathcal{H} = \{H_{y_1},\dots, H_{y_b}\}$. Suppose $\mathcal{C}_X$ is the set of all proper $L_X$-colorings of $H_X$. The above lemma implies that when $H$ has no proper $L$-coloring, we can define a function $\mathfrak{F} \colon \mathcal{C}_X \to \mathcal{H}$ such that $\mathfrak{F}(c) = H_{y_l}$, where $H_{y_l}$ is the element of $\mathcal{H}$ with lowest index $l \in [b]$ such that $c$ is a bad coloring for $H_{y_l}$. Note that if we can show $\card{\mathfrak{F}^{-1}(H_{y_l})} \leq q$ for each $l \in [b]$, $\card{\mathcal{C}_X}/q \leq b$.

\begin{lem} [\cite{KM19}] \label{lem: scclower}
Suppose $M$ is strongly $k$-chromatic-choosable and $H= M \square K_{a, 1}$.  Let $L$ be a $(k+a-1)$-assignment for $H$ such that the lists $L(v_i,x_1), \dots, L(v_i,x_a)$ are pairwise disjoint for each $i \in [n]$.  Let $\mathcal{B}$ be the set of proper $L_X$-colorings of $H_X$ that are bad for $H_{y_1}$.  Then, $|\mathcal{B}| \leq 2^{k-1}$.
\end{lem}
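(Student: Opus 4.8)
The goal is to bound $|\mathcal{B}|$, the set of proper $L_X$-colorings $f$ of $H_X$ that are bad for the single copy $H_{y_1}$. Since $M$ is strongly $k$-chromatic-choosable and $H_{y_1}$ is a copy of $M$, the coloring $f$ is bad for $H_{y_1}$ precisely when the leftover list assignment $L'(v_j, y_1) = L_{y_1}(v_j, y_1) \setminus \{f(v_j, x_l) : l \in [a]\}$ fails to properly color this copy of $M$. Now $|L(v_j, y_1)| = k+a-1$, and $f$ removes at most $a$ colors from it (the colors $f(v_j, x_1), \dots, f(v_j, x_a)$ — possibly with repeats, though in our reduced situation these specific lists $L(v_i,x_l)$ may or may not overlap with $L(v_j, y_1)$). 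So $|L'(v_j, y_1)| \geq k - 1$ for every $j$. Because $M$ is strongly $k$-chromatic-choosable, the only $(k-1)$-assignments under which $M$ is not colorable are the constant ones. So if $f$ is bad for $H_{y_1}$, then after possibly deleting colors to make all leftover lists have exactly size $k-1$, we must land on a constant $(k-1)$-assignment: there is a common set $S$ of $k-1$ colors with $S \subseteq L'(v_j, y_1)$ for all $j$.

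**The key structural observation.** Fix such a set $S$ (there are only finitely many, but I want to count $f$'s producing a *given* $S$, or rather argue the count is controlled). For each vertex $v_j$, the condition $S \subseteq L'(v_j, y_1)$ means $S \cap \{f(v_j, x_1), \dots, f(v_j, x_a)\} = \emptyset$, i.e. $f(v_j, x_l) \notin S$ for all $l \in [a]$. Combined with $f$ being a proper $L_X$-coloring: on each fiber $H_{v_j}$ (a copy of $K_{a,b}$ restricted to the $X$-side, which is an independent set of size $a$ together with the $Y$-side — actually $H_X$ restricted to first coordinate $v_j$ is just the $X$-part of $K_{a,b}$, an independent set), the only constraints on $f$ come from edges within columns $V_{x_l}$, i.e. from $M$-edges. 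So $f$ restricted to $V_{x_l}$ is a proper $L$-coloring of a copy of $M$, for each $l \in [a]$, and these $a$ colorings together avoid $S$ on every vertex. The hard part is turning "avoids $S$" plus "properly colors $M$" into a count bounded by $2^{k-1}$.

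**Counting.** Here is where I'd use the pairwise-disjointness hypothesis on $L(v_i, x_1), \dots, L(v_i, x_a)$ crucially, together with strong chromatic-choosability once more, but now applied in the reverse direction. Consider building $f$: we need to choose, for each $i \in [n]$ and $l \in [a]$, a color $f(v_i, x_l) \in L(v_i, x_l)$ so that (a) each column $V_{x_l}$ gets a proper coloring of $M$, and (b) for a bad $f$, there's a $(k-1)$-set $S$ avoided by all chosen colors at a fixed $y_1$-fiber — but wait, $S \subseteq L(v_j, y_1)$, and $L(v_j, y_1)$ has size $k+a-1$, so $L(v_j,y_1) \setminus S$ has size $a$; thus for a fixed $S$, the colors $\{f(v_j, x_l): l\in[a]\}$ that lie in $L(v_j, y_1)$ must lie in this $a$-element set $L(v_j, y_1) \setminus S$. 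Rather than fix $S$, I think the cleaner route — and the one I'd pursue — is: $f$ is bad for $H_{y_1}$ iff $H_{y_1}$ fails to be $L'$-colorable, iff (by strong $k$-chromatic-choosability, since $|L'(v_j,y_1)| \ge k-1$) there's a constant sublist. The number of *proper $L_X$-colorings of $H_X$ that are bad* should be bounded by counting over the (at most $\binom{k+a-1}{k-1}^{\text{something}}$, but actually: since $L(v_1, y_1)$ is fixed, $S$ ranges over $\binom{k+a-1}{k-1}$ subsets... this is getting large).

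Let me restate the plan more carefully. I expect the actual argument to go: a bad $f$ forces, for the copy $M \cong H_{y_1}$, the existence of a proper-coloring obstruction, which by the strong chromatic-choosability structure means the *union* $\bigcup_l \{f(v_j, x_l)\} \cap L(v_j, y_1)$ hits the complement of every $(k-1)$-subset of $L(v_j, y_1)$ — equivalently these hit sets are "spanning" in a suitable sense, forcing them to have size $\geq a$... Actually the honest thing: I'd reduce to the observation that a bad $f$ corresponds, via $L'$, to a way of writing each $L(v_j,y_1)$ as $S \sqcup T_j$ where $|S| = k-1$ is common and $T_j \supseteq \{f(v_j,x_l):l\in[a]\}\cap L(v_j,y_1)$, $|T_j|=a$. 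Summing over which vertices actually "see" $S$ and a parity/recursive argument on the tree structure of $M$'s criticality then yields $|\mathcal B| \le 2^{k-1}$. I would write the proof by induction on $k$ or $n$, peeling off a vertex of $M$ whose deletion drops the chromatic number (which exists since $M$ is $k$-vertex-critical), and tracking how the count of bad colorings at most doubles at each peel — giving $2^{k-1}$ from the base case of a single vertex / $K_1$ where the count is $1$. I'd write:

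\begin{proof}[Proof sketch of Lemma~\ref{lem: scclower}]
Write $M \cong H_{y_1}$ and let $f \in \mathcal{B}$. For $j \in [n]$ let $L'(v_j, y_1) = L(v_j, y_1) \setminus \{f(v_j, x_l) : l \in [a]\}$; since the $a$ colors removed come from $L(v_j, y_1)$, which has size $k+a-1$, we have $|L'(v_j, y_1)| \geq k-1$. As $f$ is bad, $H_{y_1}$ (a copy of the strongly $k$-chromatic-choosable graph $M$) is not $L'$-colorable, so there is a set $S_f$ of exactly $k-1$ colors with $S_f \subseteq L'(v_j, y_1)$ for every $j \in [n]$; equivalently, $\{f(v_j, x_l) : l \in [a]\} \cap L(v_j, y_1) \subseteq L(v_j, y_1) \setminus S_f$, a set of size exactly $a$ (as $S_f \subseteq L(v_j,y_1)$).

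We now count the colorings in $\mathcal{B}$ by induction on $|V(M)|$; since $K_n$ with $n \le a$ is forced here but the statement is for general strongly $k$-chromatic-choosable $M$, we run the induction using $k$-vertex-criticality. The plan is: pick a vertex $v \in V(M)$ with $\chi(M - v) = k - 1$; any proper $(k-1)$-coloring structure on $M-v$ that obstructs $L'$ either already obstructs on $M - v$ (contributing via the inductive count, bounded by $2^{k-2}$ after a corresponding reduction of the list sizes) or it is created by the constraint at $v$ (contributing an additional factor of at most $2$, since the color $f$ assigns on the column $V_{x_l}$ at $v$ is pinned down up to the two "sides" of the critical structure). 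Hence $|\mathcal{B}| \leq 2 \cdot 2^{k-2} = 2^{k-1}$, with the base case $|V(M)| = 1$ (so $k = 2$ after... ) giving $|\mathcal{B}| \le 2^{k-1}$ directly from the fact that on a single vertex the bad colorings are exactly those whose chosen colors at that vertex avoid one of at most $2^{k-1}$ relevant sets.
\end{proof}

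The main obstacle, I expect, is making the inductive step rigorous: precisely identifying how removing a critical vertex of $M$ interacts with the joint constraints coming from all $a$ columns and the disjointness of $L(v_i, x_1), \dots, L(v_i, x_a)$, and verifying that the "branching factor" per peeled vertex is genuinely at most $2$ rather than larger. The disjointness hypothesis should be exactly what prevents the branching from exceeding $2$, since it forbids the $a$ colors $f(v_j, x_1), \dots, f(v_j, x_a)$ at a vertex from coinciding, but one must check carefully that this, together with strong chromatic-choosability forcing the leftover obstruction to be a *single constant list*, pins the relevant choices down to a binary alternative at each step.
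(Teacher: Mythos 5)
First, a point of reference: this paper does not prove Lemma~\ref{lem: scclower} at all; it is quoted from \cite{KM19}, so your attempt has to be judged against the argument in that reference. Your opening step is correct and is the standard one: for a bad coloring $f$ the residual lists $L'(v_j,y_1)$ have size at least $k-1$ and the copy of $M$ is not $L'$-colorable, so strong $k$-chromatic-choosability forces a common $(k-1)$-set $S_f$ inside every residual list (in fact one can and should say more: each residual list \emph{equals} $S_f$, so all $a$ colors $f(v_j,x_l)$ lie in $L(v_j,y_1)$ and are distinct; this stronger statement is exactly display~(\ref{eq: listdecomp}) used in the paper's Lemma~\ref{lem: geninjcolor}, and the counting genuinely needs it).

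The counting itself, which is the content of the lemma, is missing. Your proposed induction on $|V(M)|$, peeling a vertex $v$ with $\chi(M-v)=k-1$, is not well-founded: neither strong chromatic-choosability nor vertex-criticality is inherited by $M-v$, so there is no inductive hypothesis to invoke for the smaller graph, and the hypotheses of the lemma (list sizes, disjointness, the residual-list structure at $y_1$) do not restrict to $M-v$ in any way you specify. Moreover, a ``branching factor of at most $2$ per peeled vertex'' would naturally give a bound exponential in $|V(M)|$, i.e.\ $2^{n-1}$, not $2^{k-1}$; nothing in the sketch ties the branching to the chromatic number, and the assertion that the choice at the peeled vertex is ``pinned down up to two sides of the critical structure'' is exactly the claim that would need proof. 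The base case is also incoherent, since a strongly $k$-chromatic-choosable graph with $k\geq 2$ has at least $k$ vertices, so $|V(M)|=1$ never occurs. The proof in \cite{KM19} does not induct on $M$; it works directly with the sets $S_f$: by disjointness of $L(v_i,x_1),\dots,L(v_i,x_a)$, distinct bad colorings have distinct associated sets (this is precisely the $K=K'$ case reproduced in the paper's Lemma~\ref{lem: geninjcolor}), any color of $S_{f'}\setminus S_f$ must occur in every row of $f$ (the $\card{c^{-1}(q)}=n$ phenomenon, as in Lemma~\ref{lem: atmostn}), and these facts combined with the disjointness of the row lists limit the family $\{S_f\}$ to at most $2^{k-1}$ possibilities. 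Without an argument of that kind, the bound $2^{k-1}$ is not established, as you yourself acknowledge.
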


Assuming the same setup as Lemma~\ref{lem: scclower}, we now prove that if there is an $(n-1)$-to-1 proper $L_X$-coloring of $H_X$ that is bad for $H_{y_1}$, then it is in fact the only bad coloring for $H_{y_1}$.

\begin{lem} \label{lem: geninjcolor}
Suppose $M$ is strongly $k$-chromatic-choosable, and $H= M \square K_{a, 1}$.  Let $L$ be a $(k+a-1)$-assignment for $H$ such that the lists $L(v_i,x_1), \dots, L(v_i,x_a)$ are pairwise disjoint for each $i \in [n]$.  Let $\mathcal{B}$ be the set of proper $L_X$-colorings of $H_X$ that are bad for $H_{y_1}$, and let $\mathcal{B}_I$ consist of all the elements of $\mathcal{B}$ that are $(n-1)$-to-1. If $\mathcal{B}_I$ is non-empty, then $\card{\mathcal{B}_I} = 1$ and $\mathcal{B} = \mathcal{B}_I$.
\end{lem}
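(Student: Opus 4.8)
The plan is to combine two observations. Strong $k$-chromatic-choosability will force any bad coloring of $H_X$ to leave, after deletion from the lists on $H_{y_1}$, a \emph{constant} $(k-1)$-assignment; and the pairwise-disjointness hypothesis on $L(v_j,x_1),\dots,L(v_j,x_a)$ will make a bad coloring completely determined by which $k-1$ colors survive that deletion. The $(n-1)$-to-1 hypothesis will then be used to identify those surviving colors as $\bigcap_{j=1}^n L(v_j,y_1)$, a set not depending on the coloring, whence uniqueness.

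First I would prove a structural claim for an arbitrary $g\in\mathcal B$: writing $L'$ for the list assignment on $H_{y_1}$ given by $L'(v_j,y_1)=L(v_j,y_1)\setminus\{g(v_j,x_l):l\in[a]\}$, the assignment $L'$ is constant with common value of size exactly $k-1$; call it $T_g$. The argument: since the lists $L(v_j,x_1),\dots,L(v_j,x_a)$ are disjoint, the $a$ colors $g(v_j,x_l)$ are distinct, so $\card{L'(v_j,y_1)}\ge(k+a-1)-a=k-1$ for each $j$; as $H_{y_1}$ (a copy of $M$) is not $L'$-colorable, $M$ is not colorable from any $(k-1)$-assignment contained in $L'$, so by strong $k$-chromatic-choosability every such $(k-1)$-sub-assignment is constant. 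A short set-theoretic argument then shows all $\card{L'(v_j,y_1)}$ must equal $k-1$: if some list had size $\ge k$, then each of its $(k-1)$-subsets, being the constant value of a non-colorable sub-assignment, would lie in every list, forcing all lists equal and of size $\ge k=\chi(M)$ — but a constant list of size $\ge\chi(M)$ does color $M$, a contradiction. Then $L'$ itself is a non-colorable $(k-1)$-assignment, hence constant. Note this gives $\card{T_g}=k-1$, $T_g\subseteq L(v_j,y_1)$ for all $j$, and $\{g(v_j,x_l):l\in[a]\}=L(v_j,y_1)\setminus T_g$.

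Next I would bring in the $(n-1)$-to-1 condition. For a proper $L_X$-coloring $g$, put $A_j(g)=\{g(v_j,x_1),\dots,g(v_j,x_a)\}$, a set of size $a$. Since no color repeats among $(v_j,x_1),\dots,(v_j,x_a)$ for fixed $j$, the map $g$ is $(n-1)$-to-1 exactly when no color lies in all of $A_1(g),\dots,A_n(g)$, i.e.\ $\bigcap_{j=1}^n A_j(g)=\emptyset$. For $f\in\mathcal B_I$, the structural claim gives $A_j(f)=L(v_j,y_1)\setminus T_f$, so $\bigl(\bigcap_{j=1}^n L(v_j,y_1)\bigr)\setminus T_f=\emptyset$; together with $T_f\subseteq L(v_j,y_1)$ for all $j$, this yields $T_f=\bigcap_{j=1}^n L(v_j,y_1)$. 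Now for \emph{any} $g\in\mathcal B$, the structural claim gives $T_g\subseteq\bigcap_{j=1}^n L(v_j,y_1)=T_f$ with $\card{T_g}=\card{T_f}=k-1$, so $T_g=T_f$.

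Finally, with $T_g=T_f$, we get $A_j(g)=L(v_j,y_1)\setminus T_g=A_j(f)$ for every $j$. Because $g(v_j,x_i)\in L(v_j,x_i)$ and the lists $L(v_j,x_1),\dots,L(v_j,x_a)$ are disjoint, $g(v_j,x_i)$ is the unique element of $A_j(g)\cap L(v_j,x_i)$; the identical statement for $f$ together with $A_j(g)=A_j(f)$ forces $g(v_j,x_i)=f(v_j,x_i)$ for all $i,j$, so $g=f$. Hence $\mathcal B=\{f\}$, which is $\mathcal B_I$, and $\card{\mathcal B_I}=1$. The only real obstacle is the structural claim described above — pushing a single non-$L'$-colorability statement all the way to ``$L'$ is a constant $(k-1)$-assignment'', which is a strengthening of Lemma~\ref{lem: scclower}; everything afterwards is straightforward bookkeeping with the disjointness hypothesis.
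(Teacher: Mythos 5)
Your proof is correct and follows essentially the same route as the paper: both arguments rest on the fact that badness plus strong $k$-chromatic-choosability forces the residual lists on $H_{y_1}$ to form a constant $(k-1)$-assignment, then use the $(n-1)$-to-1 hypothesis to pin down that constant set (your identification $T_f=\bigcap_{j=1}^n L(v_j,y_1)$ is the paper's case $K\neq K'$ argued in contrapositive form) and the pairwise disjointness of the lists on $V_X$ to recover the coloring uniquely from that set. The only difference is one of detail, not of method: the paper asserts the constant, size-exactly-$(k-1)$ residual assignment directly from strong chromatic-choosability, whereas you supply an explicit (and correct) justification of that step.
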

\begin{proof}
    Suppose for the sake of contradiction that there exist two different colorings $c, c' \in \mathcal{B}$ such that $c \in \mathcal{B}_I$. Since $M$ is strongly $k$-chromatic-choosable and both $c$ and $c'$ are bad for $H_{y_1}$, there exist two sets of colors $K, K'$, both of size $k-1$, such that for all $i \in [n]$,
    \begin{equation} \label{eq: listdecomp}
        L(v_i, y_1) - \{c(v_i, x_1), \dots, c(v_i, x_a)\} = K,
    \end{equation}
    and
    \begin{equation} \label{eq: listdecompprime}
        L(v_i, y_1) - \{c'(v_i, x_1), \dots, c'(v_i, x_a)\} = K'.
    \end{equation}
    % Note: should I explain this more thoroughly?
    Note that $K, K' \subseteq L(v_i, y_1)$ for each $i \in [n]$. We will obtain a contradiction when $K = K'$ by showing that $c = c'$.  We will obtain a contradiction when $K \ne K'$ by showing that $c \not\in \mathcal{B}_I$.
    
    If $K = K'$, then for all $i \in [n]$, 
    \[\{c(v_i, x_1), \dots, c(v_i, x_a)\} = \{c'(v_i, x_1), \dots, c'(v_i, x_a)\}. \]
    
   \noindent Fix an arbitrary $j \in [a]$ and $s \in [n]$. Since the lists $L(v_s,x_1), \dots, L(v_s,x_a)$ are pairwise disjoint, for any $l \in [a]$ such that $l \ne j$, we have $c(v_s, x_j) \ne c'(v_s, x_l)$. Therefore $c(v_s, x_j) = c'(v_s, x_j)$. As $j$ and $s$ were arbitrary, we conclude that $c = c'$.

    Now assume $K \ne K'$. Since $\card{K} = \card{K'}$, there exists a color $q \in K' - K$. Note that for all $i \in [n]$, $q \in L(v_i, y_1)$ since $K' \subseteq L(v_i, y_1)$. Then, for all $i \in [n]$, $q \in \{c(v_i, x_1), \dots, c(v_i, x_a)\}$ by (\ref{eq: listdecomp}) since $q \not\in K$. This means for each $i \in [n]$, there exists a $j \in [a]$ that satisfies $c(v_i, x_j) = q$. Therefore, $|c^{-1}(q)| \geq n$ contradicting $c \in \mathcal{B}_I$.

    Thus, it is impossible to have two different colorings in $\mathcal{B}$ when one of them is in $\mathcal{B}_I$. 
\end{proof}

Now suppose $H = M \square K_{a, b}$ and $L$ is a $(k+a-1)$-assignment for $H$ that satisfies the conditions we have established. In the next lemma, we give a sufficient condition in terms of a bound on $b$ for there to be a proper $L$-coloring of $H$. The bound on $b$ is in terms of the number of proper $L_X$-colorings of $H_X$ and the number of $(n-1)$-to-1 proper $L_X$-colorings of $H_X$.

\begin{lem} \label{lem: gencolorbound}
    Suppose $M$ is strongly $k$-chromatic-choosable, and $H= M \square K_{a, b}$.  Let $L$ be a $(k+a-1)$-assignment for $H$ such that the lists $L(v_i,x_1), \dots, L(v_i,x_a)$ are pairwise disjoint for each $i \in [n]$. Let $\mathcal{C}_X$ be the set of all proper $L_X$-colorings of $H_X$, and let $\mathcal{I}_X \subseteq \mathcal{C}_X$ be the set of $(n-1)$-to-1 colorings in $\mathcal{C}_X$. If
    \[b < \card{\mathcal{I}_X} + \frac{\card{\mathcal{C}_X} - \card{\mathcal{I}_X}}{2^{k-1}}\]
    then $H$ has a proper $L$-coloring.
\end{lem}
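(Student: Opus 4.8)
The plan is to combine Lemma~\ref{lem: badcolor} with the counting framework set up via the function $\mathfrak{F}$, using Lemmas~\ref{lem: scclower} and~\ref{lem: geninjcolor} to bound the fibers of $\mathfrak{F}$. Suppose for contradiction that $H$ has no proper $L$-coloring. By Lemma~\ref{lem: badcolor}, for each $c \in \mathcal{C}_X$ there is some index $l \in [b]$ for which $c$ is a bad coloring for $H_{y_l}$, so the function $\mathfrak{F} \colon \mathcal{C}_X \to \mathcal{H}$ is well defined. Since $\mathfrak{F}$ maps onto a set of size at most $b$ (in fact into $\mathcal{H}$), we have $\card{\mathcal{C}_X} = \sum_{l=1}^{b} \card{\mathfrak{F}^{-1}(H_{y_l})}$, so it suffices to show $\sum_{l} \card{\mathfrak{F}^{-1}(H_{y_l})}$ is too small to exhaust $\mathcal{C}_X$ once we have good per-fiber bounds; equivalently, I will show the stated inequality on $b$ forces a contradiction.

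The key observation is that each fiber $\mathfrak{F}^{-1}(H_{y_l})$ is contained in the set $\mathcal{B}^{(l)}$ of all proper $L_X$-colorings of $H_X$ that are bad for $H_{y_l}$ — this is exactly the setting of Lemmas~\ref{lem: scclower} and~\ref{lem: geninjcolor} applied to the sub-configuration $M \square K_{a,1}$ using the copy indexed by $y_l$ (note the relevant hypotheses on $L$ transfer verbatim). By Lemma~\ref{lem: scclower}, $\card{\mathcal{B}^{(l)}} \leq 2^{k-1}$ for every $l$. By Lemma~\ref{lem: geninjcolor}, if $\mathcal{B}^{(l)}$ contains any $(n-1)$-to-1 coloring, then $\card{\mathcal{B}^{(l)}} = 1$ and that coloring is the unique bad coloring for $H_{y_l}$. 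So each fiber is either a singleton (when it meets $\mathcal{I}_X$) or has size at most $2^{k-1}$; crucially, in the latter case the fiber is disjoint from $\mathcal{I}_X$.

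From here I would split $[b]$ into the set $S_1$ of indices $l$ with $\mathfrak{F}^{-1}(H_{y_l}) \cap \mathcal{I}_X \neq \emptyset$ and its complement $S_2 = [b] \setminus S_1$. For $l \in S_1$ the fiber is a single element of $\mathcal{I}_X$, and distinct such fibers are disjoint, so $\card{S_1} \leq \card{\mathcal{I}_X}$; moreover every element of $\mathcal{I}_X$ that lies in some fiber gets counted here, so the total contribution of $S_1$-fibers to $\card{\mathcal{C}_X}$ is at most $\card{\mathcal{I}_X}$, and all these contributions are $(n-1)$-to-1 colorings. For $l \in S_2$ the fibers avoid $\mathcal{I}_X$ entirely and have size at most $2^{k-1}$, so they cover at most $2^{k-1} \card{S_2}$ colorings, all lying in $\mathcal{C}_X \setminus \mathcal{I}_X$; hence $2^{k-1}\card{S_2} \geq \card{\mathcal{C}_X} - \card{\mathcal{I}_X}$, giving $\card{S_2} \geq (\card{\mathcal{C}_X} - \card{\mathcal{I}_X})/2^{k-1}$. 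Wait — I must also account for the $S_1$-fibers possibly not exhausting $\mathcal{I}_X$, but since we only need a lower bound on $b = \card{S_1} + \card{S_2}$, the cleanest route is: the union of all fibers is all of $\mathcal{C}_X$, the $(n-1)$-to-1 colorings in $\bigcup_{l \in S_1}\mathfrak{F}^{-1}(H_{y_l})$ number exactly $\card{S_1}$, and $\mathcal{I}_X \setminus \bigcup_{l\in S_1}\mathfrak{F}^{-1}(H_{y_l})$ must be covered by $S_2$-fibers — but those avoid $\mathcal{I}_X$, a contradiction unless that leftover set is empty, so $\card{S_1} = \card{\mathcal{I}_X}$ and the $S_2$-fibers cover precisely $\mathcal{C}_X \setminus \mathcal{I}_X$. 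Combining, $b = \card{S_1} + \card{S_2} \geq \card{\mathcal{I}_X} + (\card{\mathcal{C}_X} - \card{\mathcal{I}_X})/2^{k-1}$, contradicting the hypothesis. Therefore $H$ has a proper $L$-coloring.

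\textbf{Main obstacle.} The conceptual steps are all handed to us by the earlier lemmas; the delicate point is the bookkeeping in the last paragraph — correctly arguing that the $(n-1)$-to-1 colorings are partitioned so that $S_1$ accounts for all of $\mathcal{I}_X$ while $S_2$-fibers are confined to $\mathcal{C}_X \setminus \mathcal{I}_X$, so that the two lower-bound contributions to $b$ genuinely add rather than double-count. One must be careful that a fiber meeting $\mathcal{I}_X$ is forced by Lemma~\ref{lem: geninjcolor} to be a singleton (so it cannot also absorb non-$(n-1)$-to-1 colorings), and that a fiber avoiding $\mathcal{I}_X$ cannot later be re-counted against $\mathcal{I}_X$; both follow from the dichotomy in Lemma~\ref{lem: geninjcolor}, but the write-up should make the disjointness explicit.
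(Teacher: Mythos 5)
Your proposal is correct and takes essentially the same route as the paper: the paper proves the contrapositive using the same map $\mathfrak{F}$, deducing from Lemma~\ref{lem: geninjcolor} that $\mathfrak{F}$ is injective on $\mathcal{I}_X$ and that $\mathfrak{F}(\mathcal{I}_X)$ and $\mathfrak{F}(\mathcal{C}_X - \mathcal{I}_X)$ are disjoint, and from Lemma~\ref{lem: scclower} that fibers over $\mathfrak{F}(\mathcal{C}_X - \mathcal{I}_X)$ have size at most $2^{k-1}$, which is exactly the bookkeeping of your $S_1$/$S_2$ split. The only difference is presentational (contradiction versus contrapositive, counting fibers versus images), so there is nothing to fix.
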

\begin{proof}
    We prove the contrapositive. Let $\mathcal{H} = \{H_{y_1},\dots, H_{y_b}\}$. Let $\mathfrak{F} \colon \mathcal{C}_X \to \mathcal{H}$ be the function given by $\mathfrak{F}(c) = H_{y_l}$ where $H_{y_l}$ is the element of $\mathcal{H}$ with lowest index $l \in [b]$ such that $c$ is a bad coloring for $H_{y_l}$. Such a $y_l$ always exists due to Lemma~\ref{lem: badcolor}. We also let $\overline{\mathcal{I}_X} = \mathcal{C}_X - \mathcal{I}_X$.

    By Lemma~\ref{lem: geninjcolor}, when the domain of $\mathfrak{F}$ is restricted to $\mathcal{I}_X$ the resulting function is injective. Thus, $\card{\mathfrak{F}(\mathcal{I}_X)} = \card{\mathcal{I}_X}$. On the other hand, Lemma~\ref{lem: scclower} implies that for each element in $\mathfrak{F}(\overline{\mathcal{I}_X})$, there are at most $2^{k-1}$ distinct elements from $\overline{\mathcal{I}_X}$ that are mapped to it. Hence $2^{k-1}\card{\mathfrak{F}(\overline{\mathcal{I}_X})} \geq \card{\overline{\mathcal{I}_X}}$ implying $\card{\mathfrak{F}(\overline{\mathcal{I}_X})} \geq \card{\overline{\mathcal{I}_X}}/2^{k-1}$. By Lemma~\ref{lem: geninjcolor}, the images $\mathfrak{F}(\mathcal{I}_X)$ and $\mathfrak{F}(\overline{\mathcal{I}_X})$ are disjoint. Therefore, $\card{\mathfrak{F}(\mathcal{C}_X)} = \card{\mathfrak{F}(\mathcal{I}_X)} + \card{\mathfrak{F}(\overline{\mathcal{I}_X})}$ which implies
        $$\card{\mathfrak{F}(\mathcal{C}_X)} \geq \card{\mathcal{I}_X} + \frac{\card{\overline{\mathcal{I}_X}}}{2^{k-1}} = \card{\mathcal{I}_X} + \frac{\card{\mathcal{C}_X} - \card{\mathcal{I}_X}}{2^{k-1}}.$$
    On the other hand, since $\mathfrak{F}(\mathcal{C}_X) \subseteq \mathcal{H}$, $\card{\mathfrak{F}(\mathcal{C}_X)} \leq \card{\mathcal{H}} = b$; hence, the result follows.
\end{proof}

We can now describe our strategy for proving Theorem~\ref{thm: complete}.  We suppose $M = K_n$, $n \leq a$, and $H = M \square K_{a,b}$ where $b = P_{\ell}(M, n+a-1)^a - 1$. Then, we suppose that there is an $(n+a-1)$-assignment $L$ for $H$ for which there is no proper $L$-coloring of $H$. Observation~\ref{obs: disjointchoosable} allows us to assume that the lists $L(v_i, x_1), \dots, L(v_i, x_a)$ are pairwise disjoint for each $i \in [n]$.  We show that under these conditions $P_{\ell}(M, n+a-1)^a  \leq \card{\mathcal{I}_X} + (\card{\mathcal{C}_X} - \card{\mathcal{I}_X})/2^{n-1}$. Then, Lemma~\ref{lem: gencolorbound} implies $f_a(M) \geq P_{\ell}(M, n+a-1)^a$. This along with Theorem~\ref{thm: generalupper} implies Theorem~\ref{thm: complete}.

\subsection{Technical Lemmas} \label{inequal}

For the remainder of the paper, assume $M = K_n$ and $2 \leq n \leq a$. Note that $M$ is strongly $n$-chromatic-choosable. Also, assume $H = M \square K_{a,b}$, where $b = P_{\ell}(K_n, n+a-1)^a - 1 = (\prod_{i=0}^{n-1} (n+a-1-i))^a - 1$. Assume $L$ is an $(n+a-1)$-assignment for $H$ such that the lists $L(v_i,x_1), \dots, L(v_i,x_a)$ are pairwise disjoint for each $i \in [n]$.  Finally, assume $\mathcal{C}_X$ is the set of all proper $L_X$-colorings of $H_X$, and assume $\mathcal{I}_X$ is the set of $(n-1)$-to-1 colorings in $\mathcal{C}_X$.

\begin{lem} \label{lem: atmostn}
Suppose $c \in \mathcal{C}_X$ and $s$ is any color in the range of $c$. If there are distinct vertices $(v_i, x_j), (v_{i'}, x_{j'}) \in c^{-1}(s)$, then $i \ne i'$ and $j \ne j'$. Consequently, $\card{c^{-1}(s)} \leq n$.
\end{lem}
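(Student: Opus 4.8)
The plan is to prove the two assertions in turn, the second following immediately from the first. Let $c \in \mathcal{C}_X$ be a proper $L_X$-coloring of $H_X$, and let $s$ be a color appearing in the range of $c$; suppose $(v_i, x_j)$ and $(v_{i'}, x_{j'})$ are two distinct vertices both colored $s$ by $c$. First I would show $i \ne i'$. The key observation is that for fixed $i$, the vertices $(v_i, x_1), \dots, (v_i, x_a)$ all lie in $V_{v_i}$, and in $H = K_n \square K_{a,b}$ the induced subgraph $H_{v_i}$ is a copy of $K_{a,b}$; in particular $(v_i, x_j)$ and $(v_i, x_{j'})$ are adjacent in $H$ for $j \ne j'$ (both are in the $X$-side of this $K_{a,b}$... wait, in $K_{a,b}$ the $X$-side is an independent set). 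Let me reconsider: the correct reason $i \ne i'$ when $j \ne j'$ is that the lists $L(v_i, x_1), \dots, L(v_i, x_a)$ are pairwise disjoint (our standing assumption), so $c(v_i, x_j) \in L(v_i, x_j)$ and $c(v_i, x_{j'}) \in L(v_i, x_{j'})$ cannot be equal. Hence if the two vertices share the first coordinate ($i = i'$) they must also share the second ($j = j'$), contradicting distinctness; so $i \ne i'$.

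Next I would show $j \ne j'$, i.e. two vertices of the same color cannot lie in the same column $x_j$. Here the reason is adjacency: if $j = j'$ but $i \ne i'$, then $(v_i, x_j)$ and $(v_{i'}, x_j)$ both lie in $V_{x_j}$, and $H_{x_j}$ is a copy of $M = K_n$, so these two vertices are adjacent in $H$. Since $c$ is a proper $L_X$-coloring of $H_X$ and both vertices lie in $V_X = V_{x_1} \cup \dots \cup V_{x_a}$, a proper coloring must assign them different colors — contradicting $c(v_i, x_j) = c(v_{i'}, x_j) = s$. Therefore $j \ne j'$.

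Finally, for the consequence $\card{c^{-1}(s)} \le n$: I have shown that any two distinct vertices in $c^{-1}(s)$ differ in both coordinates. In particular, they differ in the first coordinate, so the map from $c^{-1}(s)$ to $V(M) = \{v_1, \dots, v_n\}$ sending $(v_i, x_j)$ to $v_i$ is injective. Since $\card{V(M)} = n$, we get $\card{c^{-1}(s)} \le n$.

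None of these steps is genuinely difficult — the whole lemma is a straightforward unpacking of the definition of the Cartesian product together with the two standing hypotheses (pairwise-disjoint column lists, and $c$ being a proper coloring of $H_X$). The only point requiring a moment's care is keeping straight which induced subgraphs of $H$ are copies of $K_n$ and which are copies of $K_{a,b}$, and in the latter, that the $X$-side vertices $(v_i, x_1), \dots, (v_i, x_a)$ form an independent set — so that within a row, it is the list-disjointness rather than adjacency that forces distinct colors, whereas within a column it is adjacency (via the $K_n$ factor) that does the work.
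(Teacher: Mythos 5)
Your proof is correct and follows essentially the same argument as the paper: list-disjointness of $L(v_i,x_1),\dots,L(v_i,x_a)$ rules out $i = i'$, adjacency within the copy $H_{x_j}$ of $K_n$ rules out $j = j'$, and injectivity of the projection onto the first coordinate gives $\card{c^{-1}(s)} \leq n$. The mid-proof detour (briefly attributing the row case to adjacency before correcting to list-disjointness) is resolved correctly, so there is no gap, though a clean write-up should omit it.
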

\begin{proof}
    Suppose there are distinct vertices $(v_i, x_j), (v_{i'}, x_{j'}) \in c^{-1}(s)$.  Note that if $j = j'$, then $i \neq i'$ and $c(v_i,x_j) = c(v_{i'},x_{j})$.  This contradicts the fact that $c$ is proper.  Thus, $j \neq j'$.  Finally, if $i = i'$, then $s \in L(v_i,x_j) \cap L(v_i,x_{j'})$ which contradicts the fact that $L(v_i,x_{j})$ and $L(v_i,x_{j'})$ are disjoint. 
\end{proof}

We now establish some notation. For each $q \in \bigcup_{j=1}^n L(v_1, x_j)$, we let

\[s_q = \begin{cases*}
    1, &if there exists $c \in \mathcal{C}_X$ such that $\card{c^{-1}(q)} = n$; \\
    0, &otherwise.
\end{cases*}\]
Additionally, for each $\mathbf{q} = (q_1, \dots, q_a) \in \prod_{j=1}^a L(v_1, x_j)$, we let $s(\mathbf{q}) = \sum_{j=1}^a s_{q_i}$. We also define $\mathcal{C}_{X, \mathbf{q}}$ as the set of all proper colorings $c$ of $H_X$ such that for all $j \in [a]$, $c(v_1, x_j) = q_j$, and $\mathcal{I}_{X, \mathbf{q}}$ as the set of $(n-1)$-to-1 proper colorings $c$ of $H_X$ such that for all $j \in [a]$, $c(v_1, x_j) = q_j$.

We can readily observe that 

\[\sum_{\mathbf{q} \in \prod_{j=1}^a L(v_1, x_j)} \card{\mathcal{I}_{X, \mathbf{q}}} = \card{\mathcal{I}_X} \qquad \text{and} \qquad \sum_{\mathbf{q} \in \prod_{j=1}^a L(v_1, x_j)} \card{\mathcal{C}_{X, \mathbf{q}}} = \card{\mathcal{C}_X}.\]
We will use these identities to bound $\card{\mathcal{C}_X}$ and $\card{\mathcal{I}_X}$. First, we need a technical lemma.

\begin{lem} \label{lem: ntimes}
    Let $\mathbf{q} = (q_1, \dots, q_a)$ be a fixed element of  $\prod_{j=1}^a L(v_1, x_j)$. If $s_{q_t} = 1$ for some $t \in [a]$, then $q_t \notin L(v_i,x_t)$ for each $i \in \{2, \dots, n\}$.
\end{lem}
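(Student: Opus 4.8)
The plan is to unpack what the hypothesis $s_{q_t} = 1$ provides and then exploit the pairwise disjointness of the lists $L(v_i, x_1), \dots, L(v_i, x_a)$. Recall first that $X$ is independent in the copy of $K_{a,b}$, so $H_X$ is a disjoint union of $a$ copies of $K_n$ (one per column $x_j$); in particular any $c \in \mathcal{C}_X$ satisfies $c(v_i, x_j) \in L(v_i, x_j)$ for all $i, j$. Now, using $s_{q_t} = 1$, I would fix a witnessing coloring $c \in \mathcal{C}_X$ with $\card{c^{-1}(q_t)} = n$. By Lemma~\ref{lem: atmostn} the $n$ vertices of $c^{-1}(q_t)$ have pairwise distinct first coordinates and pairwise distinct second coordinates; since $M$ has exactly $n$ vertices, each $v_i$ is the first coordinate of precisely one vertex of $c^{-1}(q_t)$, so I would extract an injection $\sigma \colon [n] \to [a]$ with $c^{-1}(q_t) = \{(v_i, x_{\sigma(i)}) : i \in [n]\}$; that is, $c(v_i, x_{\sigma(i)}) = q_t$ for every $i \in [n]$.

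The crucial step is to observe that $\sigma$ is completely forced by the disjointness hypothesis. Since $q_t = c(v_i, x_{\sigma(i)}) \in L(v_i, x_{\sigma(i)})$ and the lists $L(v_i, x_1), \dots, L(v_i, x_a)$ are pairwise disjoint, $\sigma(i)$ must be the unique index $l \in [a]$ with $q_t \in L(v_i, x_l)$. Applying this with $i = 1$ together with the hypothesis $q_t \in L(v_1, x_t)$, which holds because $\mathbf{q} \in \prod_{j=1}^{a} L(v_1, x_j)$, I would deduce $\sigma(1) = t$.

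To finish, I would argue by contradiction: if $q_t \in L(v_i, x_t)$ for some $i \in \{2, \dots, n\}$, then the uniqueness established above forces $\sigma(i) = t = \sigma(1)$, contradicting the injectivity of $\sigma$; hence $q_t \notin L(v_i, x_t)$ for all $i \in \{2, \dots, n\}$, which is the claim. I expect the only real obstacle to be the middle step — noticing that within a given row the column holding $q_t$ is uniquely determined by that row's lists being disjoint, so that injectivity of $\sigma$ hands us the forbidden membership directly; everything else is routine bookkeeping with Lemma~\ref{lem: atmostn}.
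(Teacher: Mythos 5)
Your proof is correct and follows essentially the same route as the paper's: take a witnessing coloring $c$ with $\card{c^{-1}(q_t)} = n$, use Lemma~\ref{lem: atmostn} to see that every row $v_i$ carries the color $q_t$ in some column, and then let the pairwise disjointness of the row lists force a contradiction. Packaging this through the injection $\sigma$ (and noting explicitly that $\sigma(1)=t$, which the paper asserts only implicitly) is a cosmetic reorganization, not a different argument.
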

\begin{proof}
    For the sake of contradiction, suppose that $q_t \in L(v_r, x_t)$ for some $r \in \{2, \dots, n\}$. Since $s_{q_t} = 1$, there exists a coloring $c \in \mathcal{C}_X$ such that $\card{c^{-1}(q_t)} = n$ and $c(v_1, x_t) = q_t$. Lemma~\ref{lem: atmostn} implies $\{ i \in [n] : \text{there is a $j \in [a]$ such that $c(v_i,x_j) = q_t$}\} = [n]$.  So, there exists a $t' \in [a]$ such that $c(v_r, x_{t'}) = q_t$, and the properness of $c$ implies that $t \ne t'$. Therefore, $L(v_r, x_{t'}) \cap L(v_r, x_t) \ne \emptyset$, a contradiction.
\end{proof}

\begin{lem} \label{lem: coloringbound}
    Let $\mathbf{q} = (q_1, \dots, q_a)$ be a fixed element of  $\prod_{j=1}^a L(v_1, x_j)$, and let $s = s(\mathbf{q})$. Then 
    \[\card{\mathcal{C}_{X, \mathbf{q}}} \geq a^{a-s} (n+a-1)^s \prod_{i=2}^{n-1} (n+a-i)^a. \]
\end{lem}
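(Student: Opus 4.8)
The plan is to count proper colorings of $H_X$ greedily, vertex-by-vertex, using the disjointness of the lists $L(v_i,x_1),\dots,L(v_i,x_a)$ to guarantee that each new choice can avoid the relevant constraints. Recall $H_X = K_n \square K_{a,\cdot}$ restricted to $X$, so $V_X = \{(v_i,x_j) : i \in [n], j \in [a]\}$ and two such vertices are adjacent precisely when they share a coordinate. Fix $\mathbf{q} = (q_1,\dots,q_a) \in \prod_{j=1}^a L(v_1,x_j)$; the colors on the $n=1$ row are already forced to be $q_1,\dots,q_a$, and these are automatically distinct (the $q_j$ lie in pairwise disjoint lists), so the row-$1$ coloring is proper on $H_{v_1}$. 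I would then extend row by row: for $i = 2, 3, \dots, n$, I choose colors for $(v_i,x_1), \dots, (v_i,x_a)$ in order. When coloring $(v_i,x_j)$, the forbidden colors are (i) the colors already used on $(v_{i'},x_j)$ for $i' < i$ — at most $i-1$ of them — and (ii) the colors already chosen on $(v_i,x_{j'})$ for $j' < j$; but by list-disjointness those lie in lists disjoint from $L(v_i,x_j)$, so they forbid nothing. Hence at least $(n+a-1) - (i-1) = n+a-i$ choices remain for each of the $a$ vertices in row $i$, giving $\prod_{i=2}^{n}(n+a-i)^a$ as a crude lower bound on $|\mathcal{C}_{X,\mathbf{q}}|$.

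The claimed bound is stronger than this crude count in the factor for row $i=n$: instead of $(n+a-i)^a = a^a$, the lemma asserts $a^{a-s}(n+a-1)^s$, i.e. for each of the $s$ coordinates $t$ with $s_{q_t}=1$ we gain an improvement from $a$ choices to $n+a-1$ choices. This is where Lemma~\ref{lem: ntimes} enters and is the crux of the argument. Suppose $s_{q_t} = 1$. Lemma~\ref{lem: ntimes} tells us $q_t \notin L(v_i,x_t)$ for every $i \in \{2,\dots,n\}$. So when I come to color $(v_n,x_t)$ in the greedy process, among the (at most $n-1$) already-used colors on $(v_2,x_t),\dots,(v_{n-1},x_t)$, none of them is $q_t$ — but more importantly, the purpose is to show the constraint from column $t$ is weaker than generic. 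Let me reconsider: the colors on $(v_{i'},x_t)$ for $i' < n$ are the threats, and there are $n-1$ of them in the generic case; we want to argue only $a-1$ of the $n+a-1$ colors of $L(v_n,x_t)$ are truly forbidden. Actually the right way: color the row-$n$ vertices in column order, but handle the $s$ special columns $t$ with $s_{q_t}=1$ differently. Since $q_t \notin L(v_{i'},x_t)$ for $i' \in \{2,\dots,n\}$, the colors $c(v_2,x_t),\dots,c(v_{n-1},x_t)$ are all $\ne$ any color we'd want, and in fact the forbidden set for $(v_n,x_t)$ from its own column has size at most $n-1$, but $q_t$ is in $L(v_n,x_t)$ and was used in row $1$... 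I need to be careful: the key realization is that in a proper coloring the constraints within $H_X$ on $(v_n,x_t)$ come from all $(v_{i'},x_t)$, $i' \ne n$, and all $(v_n,x_{j'})$, $j' \ne t$; disjointness kills the latter, leaving $n-1$ forbidden colors, so $n+a-1-(n-1) = a$ choices — that only recovers the crude bound. The improvement must come from choosing the special columns $t$ last and arguing that one of the $n-1$ potential conflicts is vacuous because $q_t$ already "used up" row $1$ in column $t$ but sits in a position that doesn't constrain $(v_n,x_t)$ beyond what's counted. Hmm — more likely: order so that for special $t$, the row-$1$ entry $q_t$ does not forbid anything new for $(v_n, x_t)$ since $q_t \notin L(v_n, x_t)$? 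No, $q_t \in L(v_1,x_t)$ which may meet $L(v_n,x_t)$.

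Let me restate the key step cleanly. I would prove the bound by choosing colors in the order: all of rows $2,\dots,n-1$ first (any order), then within row $n$ first the $a-s$ non-special columns, then the $s$ special columns. For rows $2,\dots,n-1$, vertex $(v_i,x_j)$ has at most $i-1$ forbidden colors (from its column, above it) plus zero from its row (disjointness), leaving $\ge n+a-i$ choices. For a non-special column $t$ in row $n$: at most $n-1$ forbidden colors from column $t$, zero from already-colored row-$n$ vertices (disjointness), so $\ge a$ choices. For a special column $t$ in row $n$: here $s_{q_t}=1$, so by Lemma~\ref{lem: ntimes}, $q_t \notin L(v_{i'},x_t)$ for $i' \in \{2,\dots,n\}$; thus the forbidden colors from column $t$ are $\{c(v_{i'},x_t) : i' \in [n-1]\}$, but these all lie in $\bigcup_{i'=2}^{n-1} L(v_{i'},x_t) \cup \{q_t\}$, and in fact we should observe that since $q_t \notin L(v_n,x_t)$... no. The cleanest interpretation: $s_{q_t}=1$ means there is a global coloring attaining $|c^{-1}(q_t)|=n$, and Lemma~\ref{lem: ntimes} is used to conclude that in column $t$ the restriction on the row-$n$ vertex is governed by at most $a-1$ forbidden values from the final vertex's own row in some companion structure — i.e., $(v_n,x_t)$ can be colored freely from all of $L(v_n,x_t)$ except colors forced by properness, of which there are at most $a-1$ relevant ones, yielding $\ge n+a-1-(a-1) = n$ choices. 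I will present exactly this greedy computation, invoking Lemma~\ref{lem: atmostn} and Lemma~\ref{lem: ntimes} to justify that the special columns contribute a factor $n+a-1$ rather than $a$, and multiply the per-vertex lower bounds to obtain $a^{a-s}(n+a-1)^s \prod_{i=2}^{n-1}(n+a-i)^a$. The main obstacle is pinning down precisely why Lemma~\ref{lem: ntimes} upgrades the count on special columns from $a$ to $n+a-1$ — essentially, $q_t \notin L(v_i,x_t)$ for $i\ge 2$ decouples column $t$ in rows $2,\dots,n$ from the value $q_t$, so the only binding constraints on $(v_n,x_t)$ after choosing rows $2,\dots,n-1$ and the non-special part of row $n$ come from within row $n$ (at most $a-1$, but disjointness voids those) — wait, that gives $n+a-1$ minus at most $n-1$ from the column again. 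I will need to order the special columns to be colored in row $n$ \emph{before} choosing $(v_i,x_t)$ for $i \in \{2,\dots,n-1\}$ in those columns: color $(v_n,x_t)$ for special $t$ right after row $1$, when only row $1$ of column $t$ is set, and $q_t \in L(v_1,x_t)$, giving at most $a-1$ (the row-$n$ same-row threats are disjoint, but the single column threat $q_t$ might lie in $L(v_n,x_t)$) — at most $1$ forbidden, so $\ge n+a-2$? That overshoots. I will resolve this bookkeeping in the write-up; the skeleton (greedy extension + the two lemmas controlling the special-column factor) is the proof, and the arithmetic of exactly which order and count gives the stated product is the routine-but-delicate part I would work out carefully.
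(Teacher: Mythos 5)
There is a genuine gap, and it is precisely the step you flag at the end as "routine-but-delicate bookkeeping to be worked out": that bookkeeping is the entire content of the lemma, and the way you have set it up it cannot be completed. You try to localize the improvement for a special column $t$ (one with $s_{q_t}=1$) at a single vertex, hoping to upgrade that one vertex's count from $a$ to $n+a-1$; no ordering of the greedy process achieves this, as your own repeated attempts show. The correct use of Lemma~\ref{lem: ntimes} is distributive, not local: since $q_t \notin L(v_i,x_t)$ for every $i \in \{2,\dots,n\}$, the precolored value $q_t$ on $(v_1,x_t)$ forbids nothing at \emph{any} later vertex of column $t$, so when you greedily color $(v_2,x_t),\dots,(v_n,x_t)$ each vertex $(v_i,x_t)$ has at most $i-2$ (rather than $i-1$) forbidden colors inside its list, hence at least $n+a-i+1$ choices. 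A special column therefore contributes at least $\prod_{i=2}^{n}(n+a-i+1) = (n+a-1)\prod_{i=2}^{n-1}(n+a-i)$ colorings, versus $\prod_{i=2}^{n}(n+a-i) = a\prod_{i=2}^{n-1}(n+a-i)$ for an ordinary column; the gain of the single factor $(n+a-1)/a$ per special column is the cumulative effect of a $+1$ at every row $2,\dots,n$, not a jump at row $n$. Multiplying over the $s$ special and $a-s$ ordinary columns and regrouping gives exactly $a^{a-s}(n+a-1)^{s}\prod_{i=2}^{n-1}(n+a-i)^a$, which is the paper's argument.

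A secondary but related misstep: you assert that two vertices of $H_X$ are adjacent whenever they share a coordinate. In fact $X$ is an independent set in $K_{a,b}$, so $H_X$ has no edges between distinct columns at all; it is a disjoint union of $a$ copies of $K_n$, and the count of colorings extending $\mathbf{q}$ factors exactly as a product over columns, with no appeal to list-disjointness needed to discard row constraints. Your disjointness argument happens to neutralize the phantom row edges, so the crude bound survives, but the confusion about the graph structure is part of why you were led to think the special-column gain had to be extracted at a single last vertex rather than column-by-column.
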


\begin{proof}
We establish the desired bound by giving a lower bound on the number of ways to greedily complete a proper $L_X$-coloring of $H_X$, $c$, where for each $j \in [a]$, $c(v_1, x_j) = q_j$ (i.e., the vertices $(v_1,x_1), \dots, (v_1,x_a)$ are precolored according to \textbf{q}).

Suppose $j \in [a]$.  Now, consider the number of ways we can greedily construct a proper $L_{x_j}$-coloring of $H_{x_j}$ that colors $(v_1,x_j)$ with $q_j$.  If $s_{q_j} = 1$, then $q_j \notin L(v_i,x_j)$ for each $i \in \{2, \dots, n\}$ by Lemma~\ref{lem: ntimes}. Consequently, there are at least $\prod_{i=2}^n (n+a - i +1)$ ways to greedily complete a proper $L_{x_j}$-coloring of $H_{x_j}$.  On the other hand, if $s_{q_j} = 0$, we can only guarantee that there are at least $\prod_{i=2}^n (n+a - i)$ ways to greedily complete a proper $L_{x_j}$-coloring of $H_{x_j}$. It follows that

\[\card{\mathcal{C}_{X, \mathbf{q}}} \geq \left (\prod_{i=2}^n (n+a - i +1) \right )^s \left (\prod_{i=2}^n (n+a - i) \right )^{a-s} = a^{a-s} (n+a-1)^s \prod_{i=2}^{n-1} (n+a-i)^a.\]
\end{proof}

Next, we define an auxiliary graph that we will use in the next two lemmas to get to a lower bound on $\card{\mathcal{I}_{X, \mathbf{q}}}$.

\begin{defin}
   For each $\mathbf{q} = (q_1, \dots, q_a)$ in  $\prod_{j=1}^a L(v_1, x_j)$, we define a graph $M_{ \mathbf{q}}$. Let $V(M_{\mathbf{q}}) = V_X$. The edge set of $M_{\mathbf{q}}$ is such that the following conditions hold. For each $j \in [a]$, if $s_{q_j} = 0$, the set $\{(v_i, x_j)\colon i \in [n]\}$ is a clique in $M_{ \mathbf{q}}$. Otherwise, $(v_1, x_j)$ is adjacent to each vertex in the set $\{(v_2, x_{j'}) \colon j' \in [a], j' \ne j\}$, and $\{(v_i, x_j)\colon 2 \leq i \leq n\}$ is a clique in $M_{\mathbf{q}}$. 
\end{defin}

\begin{lem} \label{lem: coloringcorrespondence}
    Suppose $\mathbf{q} = (q_1, \dots, q_a) \in \prod_{j=1}^a L(v_1, x_j)$. Let $\mathcal{C}_{ \mathbf{q}}'$ be the set of proper $L_X$-colorings $c$ of $M_{\mathbf{q}}$ such that for all $j \in [a]$, $c(v_1, x_j) = q_j$. Then $\mathcal{C}_{ \mathbf{q}}' \subseteq \mathcal{I}_{X, \mathbf{q}}$.
\end{lem}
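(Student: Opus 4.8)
The plan is to show that any proper $L_X$-coloring $c$ of $M_{\mathbf{q}}$ that agrees with $\mathbf{q}$ on the precolored vertices is automatically (i) a proper coloring of $H_X$ and (ii) an $(n-1)$-to-1 coloring, which together say exactly $c \in \mathcal{I}_{X,\mathbf{q}}$. The key point is that $M_{\mathbf{q}}$ was engineered so that its edge set captures enough of the structure of $H_X = K_n \square K_a$ (together with the constraints forced by Lemmas~\ref{lem: atmostn} and~\ref{lem: ntimes}) to guarantee both properties.

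First I would check that $c$ is a proper $L_X$-coloring of $H_X$. Recall $H_X$ is a copy of $K_n \square K_{a}$ restricted to $V_X$: its edges are $(v_i,x_j)(v_{i'},x_j)$ for $i \ne i'$ and $(v_i,x_j)(v_i,x_{j'})$ for $j \ne j'$. For a fixed column $j$, if $s_{q_j} = 0$ then $\{(v_i,x_j) : i \in [n]\}$ is a clique in $M_{\mathbf{q}}$, so $c$ gives distinct colors along that column. If $s_{q_j} = 1$, then $\{(v_i,x_j) : 2 \le i \le n\}$ is a clique, so $c$ is distinct on $(v_2,x_j),\dots,(v_n,x_j)$; and $c(v_1,x_j) = q_j$ which, by Lemma~\ref{lem: ntimes}, does not even lie in $L(v_i,x_j)$ for $i \in \{2,\dots,n\}$, hence differs from all of $c(v_2,x_j),\dots,c(v_n,x_j)$ (those values being chosen from the respective lists). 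So the column-$K_n$ edges of $H_X$ are satisfied. For the row edges $(v_i,x_j)(v_i,x_{j'})$ with $j \ne j'$: the properness I need is automatic from the hypothesis that the lists $L(v_i,x_1),\dots,L(v_i,x_a)$ are pairwise disjoint, since $c(v_i,x_j) \in L(v_i,x_j)$ and $c(v_i,x_{j'}) \in L(v_i,x_{j'})$ lie in disjoint sets. Thus $c$ is a proper $L_X$-coloring of $H_X$, i.e. $c \in \mathcal{C}_{X,\mathbf{q}}$.

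Next I would show $c$ is $(n-1)$-to-1, i.e. $|c^{-1}(s)| \le n-1$ for every color $s$. Suppose toward a contradiction that some color $s$ has $|c^{-1}(s)| \ge n$. By Lemma~\ref{lem: atmostn} (which applies since $c \in \mathcal{C}_{X,\mathbf{q}} \subseteq \mathcal{C}_X$), any two vertices in $c^{-1}(s)$ differ in both coordinates, so $|c^{-1}(s)| \le n$; hence $|c^{-1}(s)| = n$, and $c^{-1}(s)$ consists of exactly one vertex in each row $v_i$, $i \in [n]$, with pairwise distinct columns. In particular there is some column $j$ with $(v_1,x_j) \in c^{-1}(s)$, so $s = c(v_1,x_j) = q_j$. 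Since $c \in \mathcal{C}_X$ realizes $|c^{-1}(q_j)| = n$, by definition $s_{q_j} = 1$. Now look at the row-$2$ vertex in $c^{-1}(s)$: it is $(v_2,x_{j'})$ for some $j' \ne j$. But when $s_{q_j} = 1$, the construction of $M_{\mathbf{q}}$ makes $(v_1,x_j)$ adjacent to every $(v_2,x_{j'})$ with $j' \ne j$; since $c$ is proper on $M_{\mathbf{q}}$, $c(v_1,x_j) \ne c(v_2,x_{j'})$, contradicting both being equal to $s$. Therefore no such color $s$ exists, $c$ is $(n-1)$-to-1, and $c \in \mathcal{I}_{X,\mathbf{q}}$, completing the inclusion $\mathcal{C}_{\mathbf{q}}' \subseteq \mathcal{I}_{X,\mathbf{q}}$.

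I do not expect a serious obstacle here; the lemma is essentially a bookkeeping statement verifying that the auxiliary graph $M_{\mathbf{q}}$ was defined correctly. The one place to be careful is the color-$q_j$-versus-column-$j$ comparison in the $s_{q_j}=1$ case: the edge set of $M_{\mathbf{q}}$ does \emph{not} directly join $(v_1,x_j)$ to $(v_i,x_j)$ for $i \ge 2$, so properness of $c$ on $M_{\mathbf{q}}$ alone does not rule out a repeat there — this is exactly why Lemma~\ref{lem: ntimes} is invoked, to show $q_j \notin L(v_i,x_j)$ for $i \ge 2$ and hence the repeat cannot occur. Making sure that citation is in place is the only subtle point.
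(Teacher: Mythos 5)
Your proof is correct and follows essentially the same approach as the paper's: properness on $H_X$ is obtained from the cliques of $M_{\mathbf{q}}$ together with Lemma~\ref{lem: ntimes}, and the $(n-1)$-to-1 property from Lemma~\ref{lem: atmostn} together with the extra edges at $(v_1,x_j)$ when $s_{q_j}=1$. The only cosmetic difference is in the final contradiction, where the paper pins the repeated color to column $j$ via the $M_{\mathbf{q}}$-adjacencies and then invokes Lemma~\ref{lem: ntimes}, while you use the adjacency $(v_1,x_j)(v_2,x_{j'})$ directly; both are sound.
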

\begin{proof}
 Suppose $f$ is an arbitrary element of $\mathcal{C}_{ \mathbf{q}}'$. We claim that $f$ is an $(n-1)$-to-1 proper $L_X$-coloring of $H_X$. 

 We begin by showing that $f$ is a proper $L_X$-coloring of $H_X$. Suppose $i, i' \in [n]$ with $i < i'$ and $j \in [a]$.  If $s_{q_j} = 0$, we immediately have that $f(v_i,x_j) \neq f(v_{i'},x_j)$ since $(v_i,x_j)(v_{i'},x_j) \in E(M_{\textbf{q}})$.  So, we may suppose that $s_{q_j} = 1$. If $i \geq 2$, we once again have $f(v_i,x_j) \neq f(v_{i'},x_j)$ since $(v_i,x_j)(v_{i'},x_j) \in E(M_{\textbf{q}})$.  So, suppose that $i=1$.  Lemma~\ref{lem: ntimes} tells us that $q_j$ is not in any of the lists: $L(v_2,x_j), \dots, L(v_n,x_j)$ which implies $f(v_1,x_j) \neq f(v_{i'},x_j)$.  Thus, $f$ is a proper $L_X$-coloring of $H_X$.

Finally we must show that $f$ uses no color more than $(n-1)$ times.  For the sake of contradiction suppose there is a $\gamma$ such that $|f^{-1}(\gamma)| = n$. By Lemma~\ref{lem: atmostn}, $\{ i \in [n] : \text{there is a $j \in [a]$ such that $f(v_i,x_j) = \gamma$}\} = [n]$.

Without loss of generality suppose $f(v_1,x_1)=\gamma$.  This means that $q_1=\gamma$.  Since $f$ is a proper $L_X$-coloring of $H_X$, we know $s_{\gamma} = 1$.  We also know there is an $\omega \in [a]$ such that $f(v_2,x_{\omega}) = \gamma$.  By the manner in which the edges of $M_{\textbf{q}}$ are defined it must be that $\omega = 1$.  However, Lemma~\ref{lem: ntimes} tells us $\gamma \notin L(v_2,x_1)$ which is a contradiction.
\end{proof}

\begin{lem} \label{lem: muchhardercoloringbound}
    Let $\mathbf{q} = (q_1, \dots, q_a)$ be a fixed element of  $\prod_{j=1}^a L(v_1, x_j)$, and let $s = s(\mathbf{q})$. Let $\mathcal{C}_{ \mathbf{q}}'$ be the set of proper $L_X$-colorings $c$ of $M_{\mathbf{q}}$ such that for all $j \in [a]$, $c(v_1, x_j) = q_j$.  For each $j \in [a]$ let $d_j = |L(v_2,x_j) \cap (\{q_i : s_{q_i}=1\} \cup \{q_j\})|$.  The following statements hold.
    \begin{enumerate}[i)]

    \item We have $d_j \leq s+1$ for each $j \in [a]$ and $\sum_{j=1}^a d_j \leq a.$
    \item It is the case that
    \[\card{\mathcal{C'_\mathbf{q}}} \geq \prod_{j=1}^a (n+a-1 - d_j) \left (\prod_{i=3}^n (n+a - i +1) \right )^s \left (\prod_{i=3}^n (n+a - i) \right )^{a-s}.\]
    \end{enumerate}
\end{lem}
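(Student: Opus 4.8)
The plan is to establish (i) by a short set-counting argument and (ii) by counting proper colorings of $M_{\mathbf{q}}$ greedily, processing the non-precolored vertices level by level.

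For (i), write $A = \{q_i : s_{q_i} = 1\}$. Since $q_1, \dots, q_a$ lie in the pairwise disjoint lists $L(v_1,x_1), \dots, L(v_1,x_a)$ they are distinct, so $\card{A} = s$. Then $d_j = \card{L(v_2,x_j) \cap (A \cup \{q_j\})} \leq \card{A \cup \{q_j\}} \leq s+1$, which is the first inequality. For the sum, decompose $A \cup \{q_j\} = A \sqcup (\{q_j\}\setminus A)$, noting $\{q_j\}\setminus A \neq \emptyset$ exactly when $s_{q_j} = 0$, so that $d_j = \card{L(v_2,x_j)\cap A} + \epsilon_j$ with $\epsilon_j \in \{0,1\}$ and $\epsilon_j = 0$ whenever $s_{q_j} = 1$. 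Summing over $j$ and using that $L(v_2,x_1), \dots, L(v_2,x_a)$ are pairwise disjoint gives $\sum_j \card{L(v_2,x_j)\cap A} = \card{A \cap \bigcup_j L(v_2,x_j)} \leq \card{A} = s$, while $\sum_j \epsilon_j \leq \card{\{j : s_{q_j}=0\}} = a - s$; adding these yields $\sum_j d_j \leq a$.

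For (ii), I would bound $\card{\mathcal{C'_\mathbf{q}}}$ from below by counting greedy completions of the precoloring $(v_1,x_j) = q_j$, coloring the vertices in the order $(v_2,x_1),\dots,(v_2,x_a),(v_3,x_1),\dots,(v_3,x_a),\dots,(v_n,x_1),\dots,(v_n,x_a)$. The point is that in $M_{\mathbf{q}}$ the only edges joining distinct columns are those from a precolored $(v_1,x_{j'})$ with $s_{q_{j'}}=1$ to $(v_2,x_j)$, $j\neq j'$; all other edges stay within one column, and when $s_{q_j}=1$ the vertex $(v_1,x_j)$ has no neighbor in its own column. Hence, when coloring $(v_2,x_j)$ the already-colored neighbors are among $\{(v_1,x_{j'}) : j'\neq j,\ s_{q_{j'}}=1\}$ together with $(v_1,x_j)$ if $s_{q_j}=0$, so their colors lie in $A\cup\{q_j\}$ and at most $d_j$ colors of $L(v_2,x_j)$ are forbidden, leaving at least $n+a-1-d_j$ choices; when coloring $(v_i,x_j)$ with $i\geq 3$ all of its already-colored neighbors lie in column $j$, namely $(v_1,x_j),\dots,(v_{i-1},x_j)$ (so $i-1$ of them) if $s_{q_j}=0$, and $(v_2,x_j),\dots,(v_{i-1},x_j)$ (so $i-2$ of them) if $s_{q_j}=1$, leaving at least $n+a-i$, respectively $n+a-i+1$, choices. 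Each of these factors is nonnegative since $n\geq 2$ and $d_j \leq s+1$, so multiplying the per-vertex lower bounds and grouping the $s$ columns with $s_{q_j}=1$ against the $a-s$ columns with $s_{q_j}=0$ gives exactly $\prod_{j=1}^a (n+a-1-d_j)\bigl(\prod_{i=3}^n (n+a-i+1)\bigr)^{s}\bigl(\prod_{i=3}^n (n+a-i)\bigr)^{a-s}$.

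The routine but error-prone part — and the step to get right — is the bookkeeping of $E(M_{\mathbf{q}})$: one must verify that no adjacency between different columns arises beyond the listed $(v_1,x_{j'})$–$(v_2,x_j)$ edges, and that in a column with $s_{q_j}=1$ the precolored vertex $(v_1,x_j)$ is joined to none of $(v_2,x_j),\dots,(v_n,x_j)$, so that the earlier-neighbor counts $i-1$ and $i-2$ are exact and the color set seen by $(v_2,x_j)$ really is contained in $A\cup\{q_j\}$. Lemma~\ref{lem: ntimes} (used to know $q_j\notin L(v_2,x_j)$ when $s_{q_j}=1$) and the pairwise disjointness of the lists $L(v_i,x_j)$ are exactly what make these greedy bounds legitimate, so the displayed product is a genuine lower bound for $\card{\mathcal{C'_\mathbf{q}}}$.
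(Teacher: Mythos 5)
Your proof is correct and takes essentially the same approach as the paper: the same set-counting for Statement~(i) (using $|A|=s$, the disjointness of $L(v_2,x_1),\dots,L(v_2,x_a)$, and the distinctness of $q_1,\dots,q_a$) and the same greedy, column-by-column completion for Statement~(ii), producing the factors $n+a-1-d_j$, $n+a-i+1$, and $n+a-i$. The only cosmetic difference is that for rows $i \geq 3$ in columns with $s_{q_j}=1$ you rely on the absence of the edge between $(v_1,x_j)$ and $(v_i,x_j)$ in $M_{\mathbf{q}}$, whereas the paper invokes Lemma~\ref{lem: ntimes} there; both yield the identical bound.
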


\begin{proof}
  The first inequality of Statement~(i) follows from the definition of $d_j$.  The second inequality follows from the fact that $L(v_2,x_1), \ldots, L(v_2,x_a)$ are pairwise disjoint and $|\{q_1, \ldots, q_a\}| = a$.  

  Now, we turn our attention to Statement~(ii).  We prove our desired bound by describing a procedure for greedily constructing a proper $L_X$-coloring, $c$, of $M_{\textbf{q}}$ and bounding the number of ways each step can completed.

First, for each $j \in [a]$, let $c(v_1, x_j) = q_j$
(i.e., the vertices $(v_1,x_1), \dots, (v_1,x_a)$ are colored according to \textbf{q}).  This can be done in one way.  Then, for each $j \in [a]$ color $(v_2,x_j)$ with some $a_j \in L(v_2,x_j) - (\{q_i : s_{q_i}=1\} \cup \{q_j\})$.  This can be done in at least $\prod_{j=1}^a (n+a-1 - d_j)$ ways. Notice our coloring is now complete if $n =2$ and the desired bound holds when $n =2$.  So, we may assume $n \geq 3$.

Suppose $j \in [a]$.  Now, consider the number of ways we can greedily construct a proper $L_{x_j}$-coloring of $H_{x_j}$ that colors $(v_1,x_j)$ with $q_j$ and $(v_2, x_j)$ with $a_j$.  If $s_{q_j} = 1$, then $q_j \notin L(v_i,x_j)$ for each $i \in \{3, \dots, n\}$ by Lemma~\ref{lem: ntimes}. Consequently, there are at least $\prod_{i=3}^{n} (n+a - 1 - (i-2))$ ways to greedily complete a proper $L_{x_j}$-coloring of $H_{x_j}$.  On the other hand, if $s_{q_j} = 0$, we can only guarantee that there are at least $\prod_{i=3}^n (n+a - 1 - (i-1))$ ways to greedily complete a proper $L_{x_j}$-coloring of $H_{x_j}$. It follows that
\begin{align*}
    \card{\mathcal{C'_\mathbf{q}}} \geq \prod_{j=1}^a (n+a-1 - d_j) \left (\prod_{i=3}^n (n+a - i +1) \right )^s \left (\prod_{i=3}^n (n+a - i) \right )^{a-s}.
\end{align*} \end{proof}

\subsubsection{Applying Karamata's Inequality}\label{sec: Karamata}

Lemmas~\ref{lem: coloringcorrespondence} and \ref{lem: muchhardercoloringbound} yield a lower bound for $\card{\mathcal{I}_{X, \mathbf{q}}}$ that depends on $d_1, \ldots, d_a$. We will now use the celebrated Karamata's Inequality~\cite{KD05} to obtain a lower bound that depends only on $n, a$, and $s$. The statement of this inequality first requires a definition. Let $\textbf{a} = (a_i)_{i=1}^n$ and $\textbf{b} = (b_i)_{i=1}^n$ be two finite sequences of real numbers with $n \geq 2$. We say that $\textbf{a}$ \textit{majorizes} $\textbf{b}$, written $\textbf{a} \succ \textbf{b}$, if the following three conditions hold:
\begin{enumerate}[i)]
    \item $a_1 \geq \dots \geq a_n$ and $b_1 \geq \dots \geq b_n$;
    \item $a_1 + \dots + a_k \geq b_1 + \dots + b_k$ for each $k \in [n-1]$;
    \item $a_1 + \dots + a_n = b_1 + \dots + b_n$.
\end{enumerate}

\begin{lem} [\cite{KD05}]
    Suppose $n \geq 2$. Let $\mathbf{a} = (a_i)_{i=1}^n$ and $\mathbf{b} = (b_i)_{i=1}^n$ be finite sequences of real numbers from an interval $(\alpha, \beta) \subseteq \R$. If $\mathbf{a} \succ \mathbf{b}$ and $f \colon (\alpha, \beta) \to \R$ is a concave function, then
    \[\sum_{i=1}^n f(a_i) \leq \sum_{i=1}^n f(b_i).\]
\end{lem}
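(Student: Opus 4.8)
The statement is the classical Karamata majorization inequality, and the plan is to prove it by the standard argument combining supporting lines of a concave function with Abel's summation-by-parts formula. First I would record the following consequence of concavity: for each $i \in [n]$ there is a real number $c_i$ such that $f(x) \leq f(b_i) + c_i(x - b_i)$ holds for every $x \in (\alpha, \beta)$, and moreover the $c_i$ can be chosen so that $c_1 \leq c_2 \leq \cdots \leq c_n$. Indeed, a concave function on an open interval has one-sided derivatives at every point, each of $f'_+$ and $f'_-$ is non-increasing, and the line through $(b_i, f(b_i))$ with any slope in $[f'_+(b_i), f'_-(b_i)]$ is a supporting line from above; taking $c_i = f'_+(b_i)$ and using $b_1 \geq b_2 \geq \cdots \geq b_n$ together with the monotonicity of $f'_+$ gives $c_i \leq c_{i+1}$.

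Plugging $x = a_i$ into the supporting-line inequality for each $i$ and summing gives
\[ \sum_{i=1}^n f(a_i) \leq \sum_{i=1}^n f(b_i) + \sum_{i=1}^n c_i(a_i - b_i), \]
so it suffices to show that $\sum_{i=1}^n c_i(a_i - b_i) \leq 0$. To do this I would set $S_0 = 0$ and $S_k = \sum_{i=1}^k (a_i - b_i)$ for $k \in [n]$; conditions (ii) and (iii) in the definition of $\mathbf{a} \succ \mathbf{b}$ say precisely that $S_k \geq 0$ for $k \in [n-1]$ and $S_n = 0$. Writing $a_i - b_i = S_i - S_{i-1}$ and rearranging (Abel summation) yields
\[ \sum_{i=1}^n c_i(a_i - b_i) = c_n S_n - c_1 S_0 + \sum_{i=1}^{n-1}(c_i - c_{i+1}) S_i = \sum_{i=1}^{n-1}(c_i - c_{i+1}) S_i. \]
Each factor $c_i - c_{i+1}$ is $\leq 0$ by the monotonicity of the $c_i$ and each $S_i$ is $\geq 0$, so the right-hand side is $\leq 0$, which finishes the proof.

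The only subtle step is the construction of the monotone supporting slopes in the first paragraph: this is exactly where concavity (rather than mere continuity) is used, and it rests on the standard structure theory of concave functions on an open interval, namely the existence and monotonicity of one-sided derivatives. If one wishes to avoid derivatives entirely, an alternative is an induction that transforms $\mathbf{b}$ into $\mathbf{a}$ by a finite sequence of ``Robin Hood'' transfers, each of which moves two coordinates closer together while preserving the partial-sum order and can only decrease $\sum f$; but the supporting-line proof is shorter and cleaner, so I would present that. I would also remark that the hypothesis $a_1 \geq \cdots \geq a_n$ is not actually needed for this argument — only the partial-sum inequalities on the $S_k$ enter.
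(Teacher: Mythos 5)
Your proof is correct: it is the standard supporting-line argument (supergradients at the $b_i$, monotone by concavity, combined with Abel summation and the partial-sum conditions $S_k \geq 0$, $S_n = 0$), and every step checks out, including the remark that the ordering of the $a_i$ is not actually used. The paper itself gives no proof of this lemma---it is quoted from the cited reference \cite{KD05}, where essentially this same classical argument appears---so there is no divergence to report.
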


\begin{lem} \label{lem: optlemma}
    Let $n, m, k, C$ be positive integers such that $n \geq 2$, $m > k$ and $C > k$. Let $\mathcal{X}$ be the set of $n$-tuples $(x_1, \dots, x_n) \in \Z^n$ such that $0 \leq x_i \leq k$ for all $i \in [n]$ and $x_1 + \dots + x_n \leq m$. Suppose $m = kq + r$ where $q$ and $r$ are nonnegative integers such that $0 \leq r < k$. Then the following statements hold.
    \begin{enumerate}[i)]
        \item If $n \leq q$, then
        \[\min_{(x_1, \dots, x_n) \in \mathcal{X}} \prod_{i=1}^n (C - x_i) = (C-k)^n.\]
        \item If $n \geq q+1$, then
        \[\min_{(x_1, \dots, x_n) \in \mathcal{X}} \prod_{i=1}^n (C - x_i) = (C-k)^q(C-r)C^{n-(q+1)}.\]
    \end{enumerate}
\end{lem}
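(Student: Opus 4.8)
# Proof Proposal for Lemma~\ref{lem: optlemma}

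The plan is to prove this by a standard ``smoothing'' or exchange argument, showing that any minimizer must have a very rigid structure, and then identifying that structure explicitly. Throughout, note that $x \mapsto \ln(C-x)$ is a strictly concave function on $(-\infty, C)$, and since $C > k$ every admissible $x_i$ lies in the range $[0,k] \subseteq [0, C-1]$ where this is defined. Minimizing $\prod_{i=1}^n (C - x_i)$ is equivalent to minimizing $\sum_{i=1}^n \ln(C - x_i)$, so we are minimizing a concave function of the vector $(x_1,\dots,x_n)$ over the (discretized) polytope $\mathcal{X}$.

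First I would observe that since $\mathcal{X}$ is finite a minimizer exists, and since increasing any coordinate only decreases each factor $C - x_i$, any minimizer $(x_1,\dots,x_n)$ must be ``saturated'' in the sense that we cannot increase any $x_i$ (keeping it $\le k$ and keeping the sum $\le m$) — i.e. either $x_i = k$ for all $i$, or $\sum x_i = m$. The first case only arises when $nk \le m$, equivalently $n \le q$ (with $r=0$) or more precisely $n \le q$ handles it via the bound $(C-k)^n$; I would fold this in. So assume $\sum_{i=1}^n x_i = m$ at the minimum (this is automatic when $n \ge q+1$ since then $m \le nk$ and one can always push up to sum exactly $m$; a quick check is needed that $m \le nk$ when $n\ge q+1$, which follows since $nk \ge (q+1)k = m - r + k > m$). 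Now comes the key exchange step: suppose two coordinates satisfy $0 < x_i \le x_j < k$ with $x_j - x_i \ge 1$ is not required — rather, I claim that at a minimizer, the sorted vector must have all coordinates equal to $k$ except possibly one, with the remainder equal to $r$. The argument: if two coordinates $x_i, x_j$ both lie strictly between $0$ and $k$ (i.e. $1 \le x_i, x_j \le k-1$), we can replace the pair $(x_i, x_j)$ by $(x_i - 1, x_j + 1)$ (if $x_i \le x_j$, pushing them apart), which keeps the sum fixed and, since $(x_i-1, x_j+1) \succ (x_i,x_j)$ as a majorization and $\ln(C-\cdot)$ is concave, Karamata's Inequality gives $\ln(C-x_i+1) + \ln(C-x_j-1) \le \ln(C-x_i) + \ln(C-x_j)$, strictly decreasing the product unless already extremal. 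Iterating, we may assume at most one coordinate is strictly between $0$ and $k$.

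At this point the minimizer has, say, $p$ coordinates equal to $k$, one coordinate equal to some $t \in [0,k]$, and the rest equal to $0$, with $pk + t = m$. Writing $m = kq + r$, the possibilities are constrained: we need $p \le n$ and $p \le q$ unless $t$ absorbs the excess; in fact $p = q, t = r$ (using $n \ge q+1$ so there is room for the $t$-coordinate and the zeros) is forced as the unique saturated configuration of this shape, giving $\prod (C-x_i) = (C-k)^q (C-r) C^{n-(q+1)}$. I would then verify this is genuinely the minimum among all shapes of this restricted form: any other split $pk + t = m$ with $p < q$ forces $t = m - pk = k(q-p) + r > k$, impossible; and $p > q$ forces $t < r$ but then we'd need another positive coordinate, contradicting ``at most one strictly interior coordinate'' unless we instead have $p = q+1$-ish configurations which the saturation/exchange argument already excluded. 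For Statement~(i), when $n \le q$, the all-$k$ vector is admissible (since $nk \le qk \le m$) and gives $(C-k)^n$, which is clearly the minimum since every factor is at least $C - k$.

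The main obstacle I anticipate is the bookkeeping in the discrete case: Karamata's Inequality as stated requires genuine majorization with sorted sequences, and the single unit-exchange step $(x_i, x_j) \mapsto (x_i - 1, x_j + 1)$ produces majorization only after re-sorting, so I need to be careful that re-sorting doesn't break the comparison — this is routine but must be stated cleanly (the pair $(x_j+1, x_i-1)$ sorted dominates $(x_j, x_i)$ sorted provided $x_j + 1 > x_i - 1$, which holds whenever $x_j \ge x_i$, with equality of sums). A secondary subtlety is handling the boundary between cases~(i) and~(ii) cleanly — in particular when $r = 0$ and $n = q$, both formulas should agree ($(C-k)^q (C-r) C^{n-q-1} = (C-k)^q \cdot C \cdot C^{-1} = (C-k)^q$), which serves as a useful consistency check — and making sure the ``saturation'' claim ($\sum x_i = m$ at any minimizer when $n \ge q+1$) is justified, which reduces to the inequality $m \le nk$ that I noted follows from $n \ge q+1$.
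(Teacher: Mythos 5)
Your proposal is correct, and it shares the paper's skeleton: Statement~(i) is handled the same way (the all-$k$ tuple is admissible and every factor is at least $C-k$), and your ``saturation'' step -- that any minimizer with $n \geq q+1$ must have coordinate sum exactly $m$, justified by the same count $kn \geq (q+1)k > kq + r = m$ -- is exactly the paper's first reduction. Where you diverge is in the key step. The paper writes down the extremal tuple $(k,\dots,k,r,0,\dots,0)$ explicitly, checks that it majorizes every sorted tuple in $\mathcal{X}$ with coordinate sum $m$ (partial sums $\leq kl$ for $l \leq q$, $\leq m$ afterwards), and applies Karamata's Inequality to $\log(C-x)$ once, globally; attainment is immediate since that tuple lies in $\mathcal{X}$. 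You instead run a local exchange (smoothing) argument -- a unit transfer between two coordinates that are both strictly inside $(0,k)$ strictly decreases the objective by strict concavity, which is just the two-point case of Karamata -- and then classify the surviving configurations by the arithmetic $pk + t = kq + r$ with $0 \leq t < k$, forcing $p = q$, $t = r$. Your route is more elementary (it never needs the full majorization chain) but carries the bookkeeping you flag: you should either apply the exchange directly at a minimizer (which exists since $\mathcal{X}$ is finite, making ``iterating'' unnecessary) or supply a termination potential, and in the classification the case $p > q$ is outright impossible (it would force $t = m - pk < 0$), not merely ``$t < r$'' as you wrote. The paper's single application of Karamata buys a shorter write-up with no structural classification of minimizers; your version buys transparency about why the extremal shape is what it is. Both are valid proofs of the lemma.
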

\begin{proof}
    For all $(x_1, \dots, x_n) \in \mathcal{X}$, let

    \[P(x_1, \dots, x_n) = \prod_{i=1}^n (C - x_i).\]
    
    First, suppose $n \leq q$. Let $\textbf{y}=(y_1, \dots, y_n)$ be the $n$-tuple given by $y_i=k$ for each $i \in [n]$. Since $kn \leq kq \leq m$, $\mathbf{y} \in \mathcal{X}$.  For any $(x_1, \dots, x_n) \in \mathcal{X}$, $C - x_i \geq C - k$ for each $i \in [n]$. So,
    \[P(x_1, \dots, x_n) \geq P(\mathbf{y}) = (C - k)^n\]
    which completes the proof of Statement~(i).

    From now on, suppose $n \geq q + 1$. We claim that the minimum value of $P(x_1, \dots, x_n)$ over $\mathcal{X}$ is attained at a point $(z_1, \dots, z_n)$ in which $z_1 + \dots + z_n = m$. Note that if $z_1 + \dots + z_n < m$, there is a $t \in [n]$ such that $z_t < k$; otherwise,

    \[m > z_1 + \dots + z_n = kn \geq kq + k > kq + r = m\]
    which is a contradiction. Let $(w_1, \dots, w_n)$ be the $n$-tuple given by $w_i = z_i$ when $i \neq t$ and $w_t = z_t+1$. 
 Then $(w_1, \dots, w_n) \in \mathcal{X}$, since $w_t = z_t + 1 \leq k$ and $w_1 + \dots + w_n \leq m$. Furthermore,
    \[P(w_1, \dots, w_n) < P(z_1, \dots, z_n).\]

    Therefore, if $z_1 + \dots + z_n < m$, the minimum value of $P$ is not attained at $(z_1, \dots, z_n)$. Hence the minimum occurs at an element in the set $\mathcal{X'}$, where $\mathcal{X'}$ is the set of all elements of $\mathcal{X}$ whose coordinates sum to $m$.

    Let $f(x) = \log (C - x)$ for all $x \in (-1, C)$. Suppose $(x_1, \dots, x_n) \in \mathcal{X}$.  Since $x_i \leq k < C$ for all $i \in [n]$, $f(x_i)$ is real for each $i \in [n]$. Also, for each $(x_1, \dots, x_n) \in \mathcal{X}$, $\log P(x_1, \dots, x_n) = \sum_{i=1}^n f(x_i).$

    Consider the $n$-tuple $(x_1^*, \ldots, x_n^*)$ where $x_i^* = k$ for each $i \in [q]$, $x_{q+1}^* = r$, and $x_i^* = 0$ otherwise. Additionally, for a given $(x_1, \dots, x_n) \in \mathcal{X'}$, let $x_1', \dots, x_n'$ be an ordering of the numbers $x_1, \dots, x_n$ such that $x_1' \geq \dots \geq x_n'$. We claim $(x_1^*, \dots, x_n^*) \succ (x_1', \dots, x_n')$. To see why, note: for all $l \in [q]$, $\sum_{i=1}^l x_i^* = kl \geq \sum_{i=1}^l x_i'$, for all $l \in \{q+1, \ldots, n-1\}$ $\sum_{i=1}^l x_i^* = m \geq \sum_{i=1}^l x_i'$, and $\sum_{i=1}^n x_i^* = m = \sum_{i=1}^n x_i'.$

    Since $f(x)$ is concave on $(-1, C)$, Karamata's inequality yields
    \[\sum_{i=1}^n f(x_i) = \sum_{i=1}^n f(x_i') \geq \sum_{i=1}^n f(x_i^*) = q \log (C - k) + \log (C - r) + (n-q-1) \log C \]
    which implies
    \[P(x_1, \dots, x_n) \geq P(x_1^*, \dots, x_n^*) = (C-k)^q(C-r)C^{n-q-1}\]
    as desired.
\end{proof}

% What happens when s=a? This case seemed to require separate treatment in our discussions.
\begin{lem} \label{lem: nminus1to1bound}
    Let $\mathbf{q} = (q_1, \dots, q_a)$ be a fixed element of $\prod_{j=1}^a L(v_1, x_j)$, and let $s = s(\mathbf{q})$. Then 
    \[\card{\mathcal{I}_{X, \mathbf{q}}} \geq (n+a-s-2)^{\frac{a}{s+1}} (n+a-1)^{\frac{sa}{s+1}} \left (\prod_{i=3}^n (n+a - i +1) \right )^s \left (\prod_{i=3}^n (n+a - i) \right )^{a-s}.\]
\end{lem}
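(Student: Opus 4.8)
The plan is to combine Lemmas~\ref{lem: coloringcorrespondence} and~\ref{lem: muchhardercoloringbound} to reduce the statement to a single numerical inequality, and then to resolve that inequality using Lemma~\ref{lem: optlemma} together with the weighted AM-GM inequality. By Lemma~\ref{lem: coloringcorrespondence} we have $\mathcal{C}_{\mathbf{q}}' \subseteq \mathcal{I}_{X,\mathbf{q}}$, so $\card{\mathcal{I}_{X,\mathbf{q}}} \geq \card{\mathcal{C}_{\mathbf{q}}'}$. By Lemma~\ref{lem: muchhardercoloringbound} there are nonnegative integers $d_1, \dots, d_a$ with $d_j \leq s+1$ for each $j$ and $\sum_{j=1}^a d_j \leq a$ such that
\[ \card{\mathcal{C}_{\mathbf{q}}'} \geq \left( \prod_{j=1}^a (n+a-1-d_j) \right) \left( \prod_{i=3}^n (n+a-i+1) \right)^s \left( \prod_{i=3}^n (n+a-i) \right)^{a-s}. \]
The last two products are exactly the corresponding factors appearing in the statement, so the lemma will follow once we show that
\[ \prod_{j=1}^a (n+a-1-d_j) \geq (n+a-s-2)^{a/(s+1)} (n+a-1)^{sa/(s+1)} \]
for every integer tuple $(d_1, \dots, d_a)$ with $0 \leq d_j \leq s+1$ and $\sum_{j=1}^a d_j \leq a$. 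I would prove this by bounding the left-hand side below by its minimum over all such tuples, splitting into cases according to the size of $s$.

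In the main case $s \leq a-2$, I would apply Lemma~\ref{lem: optlemma} with its parameters $n, k, m, C$ taken to be $a, s+1, a, n+a-1$, respectively; the hypotheses $a \geq 2$, $a > s+1$, and $n+a-1 > s+1$ all hold because $2 \leq n \leq a$ and $s \leq a-2$. Write $a = (s+1)q + r$ with $0 \leq r < s+1$. If $s=0$ then $q = a$ and $r = 0$, so part~(i) gives the minimum $(n+a-2)^a$, which is precisely the claimed right-hand side when $s = 0$ (this case is also immediate since then each $n+a-1-d_j \geq n+a-2$). If $1 \leq s \leq a-2$, then $s+1 \geq 2$ forces $q \leq a/2 \leq a-1$, so part~(ii) applies and gives the minimum $(n+a-s-2)^q (n+a-1-r)(n+a-1)^{a-q-1}$. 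It remains to verify that this is at least $(n+a-s-2)^{a/(s+1)}(n+a-1)^{sa/(s+1)}$. Setting $A = n+a-1$, $B = n+a-s-2 = A-(s+1)$, and $t = r/(s+1) \in [0,1)$, and using the identities $a/(s+1) = q+t$ and $n+a-1-r = (1-t)A + tB$, a short rearrangement reduces the desired inequality to $(1-t)A + tB \geq A^{1-t}B^t$, which is the weighted AM-GM inequality ($A$ and $B$ are positive here since $s \leq a-2$).

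It remains to treat the cases $s = a-1$ and $s = a$, in which Lemma~\ref{lem: optlemma} does not directly apply because its hypothesis $m > k$ fails. In both of these cases $s + 1 \geq a$, so the constraint $d_j \leq s+1$ is automatically implied by $d_j \geq 0$ and $\sum_j d_j \leq a$; hence the minimum of $\prod_{j=1}^a (n+a-1-d_j)$ over the feasible tuples is $(n-1)(n+a-1)^{a-1}$, attained at $(a, 0, \dots, 0)$. This follows by exactly the reasoning used inside the proof of Lemma~\ref{lem: optlemma}: increasing any coordinate strictly decreases the product (each factor is at least $n-1 \geq 1$), so the minimum occurs at a tuple summing to $a$, and $(a,0,\dots,0)$ majorizes every such tuple, so Karamata's inequality applied to the concave function $x \mapsto \log(n+a-1-x)$ finishes it. For $s = a-1$ the target right-hand side simplifies to exactly $(n-1)(n+a-1)^{a-1}$, so we are done. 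For $s=a$ the target right-hand side equals $(n-2)^{a/(a+1)}(n+a-1)^{a^2/(a+1)}$; dividing through by $(n+a-1)^{a-1}$ and using $a^2/(a+1) - (a-1) = 1/(a+1)$, the required inequality becomes $n-1 \geq (n-2)^{a/(a+1)}(n+a-1)^{1/(a+1)}$, which is once more a single application of weighted AM-GM.

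The arithmetic of the exponents and the direct numerical comparisons are routine. The genuinely delicate point is reconciling the ``quotient and remainder'' output of Lemma~\ref{lem: optlemma} with the clean closed form $(n+a-s-2)^{a/(s+1)}(n+a-1)^{sa/(s+1)}$ --- this is exactly where one must recognize the weighted AM-GM inequality --- together with the bookkeeping needed to bypass Lemma~\ref{lem: optlemma} in the two boundary cases $s \in \{a-1, a\}$, which fortunately turn out to be the easiest ones.
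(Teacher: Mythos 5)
Your proposal is correct and follows essentially the same route as the paper: reduce via Lemmas~\ref{lem: coloringcorrespondence} and~\ref{lem: muchhardercoloringbound} to the product inequality $\prod_{j=1}^a (n+a-1-d_j) \geq (n+a-s-2)^{a/(s+1)}(n+a-1)^{sa/(s+1)}$, minimize via Lemma~\ref{lem: optlemma}, and then bridge the quotient-remainder form to the closed form by weighted AM-GM applied to the factor $n+a-1-r$. The only difference is that you treat the boundary cases $s \in \{a-1, a\}$ (where the hypothesis $m>k$ of Lemma~\ref{lem: optlemma} fails) separately, whereas the paper invokes the lemma uniformly for all $0 < s \leq a$; your extra care is harmless and, if anything, slightly tidier.
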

\begin{proof}
    Using the notation of Lemma~\ref{lem: muchhardercoloringbound}, by Statement~(ii) of Lemma~\ref{lem: muchhardercoloringbound}  it suffices to show that 
    \[\prod_{j=1}^a (n+a-1 - d_j) \geq (n+a-s-2)^{\frac{a}{s+1}} (n+a-1)^{\frac{sa}{s+1}}.\]

    Suppose $a = (s+1)q + r$, where $q$ and $r$ are nonnegative integers such that $0 \leq r < s+1$. Suppose $s = 0$. Then, Statement (i) of Lemma~\ref{lem: muchhardercoloringbound} and Lemma~\ref{lem: optlemma} imply

    \begin{align*}
        \prod_{j=1}^a (n+a-1 - d_j) &\geq (n+a-2)^a
    \end{align*}
    as desired. Now suppose $0 < s \leq a$. Statement (i) of Lemma~\ref{lem: muchhardercoloringbound} and Lemma~\ref{lem: optlemma} imply

    \begin{align*}
        \prod_{j=1}^a (n+a-1 - d_j) &\geq (n+a-2-s)^q (n+a-1-r)(n+a-1)^{a-(q+1)}.
    \end{align*}
    By the AM-GM inequality,
    \[n+a-1-r = \frac{r(n+a-2-s) + (s+1-r)(n+a-1)}{s+1} \geq (n+a-2-s)^{\frac{r}{s+1}} (n+a-1)^{\frac{s+1-r}{s+1}}.\]
    Therefore,
    \begin{align*}
        \prod_{j=1}^a (n+a-1 - d_j) &\geq (n+a-2-s)^{q+\frac{r}{s+1}}(n+a-1)^{a-q-\frac{r}{s+1}} \\
        &= (n+a-s-2)^{\frac{a}{s+1}} (n+a-1)^{\frac{sa}{s+1}}
    \end{align*}
    as desired.
\end{proof}

\subsection{Completing the proof of Theorem~\ref{thm: complete}} \label{main}

In this section we will prove that $P_{\ell}(M, n+a-1)^a  \leq \card{\mathcal{I}_X} + (\card{\mathcal{C}_X} - \card{\mathcal{I}_X})/2^{n-1}$ which by Lemma~\ref{lem: gencolorbound} will complete our proof of Theorem~\ref{thm: complete}.  Note that $P_{\ell}(M, n+a-1)^a = \prod_{i=1}^n (n+a-i)^a$, and
$$ \frac{\card{\mathcal{C}_X} + (2^{n-1} - 1)\card{\mathcal{I}_X}}{2^{n-1}} 
        = \frac{1}{2^{n-1}} \sum_{\mathbf{q} \in \prod_{j=1}^a L(v_1, x_j)} (\card{\mathcal{C}_{X, \mathbf{q}}} + (2^{n-1} - 1)\card{\mathcal{I}_{X, \mathbf{q}}}).$$
Since the sum on the right has $(n+a-1)^a$ terms, showing that $\card{\mathcal{C}_{X, \mathbf{q}}} + (2^{n-1} - 1)\card{\mathcal{I}_{X, \mathbf{q}}} \geq 2^{n-1} \prod_{i=2}^n (n+a-i)^a$ for each $\mathbf{q} \in \prod_{j=1}^a L(v_1, x_j)$ will imply the desired inequality.  If $s = s(\mathbf{q})$ for some $\mathbf{q} \in \prod_{j=1}^a L(v_1, x_j)$, Lemmas~\ref{lem: coloringbound} and \ref{lem: nminus1to1bound} along with some simplification tell us that proving
 \begin{align} &\left (1 + \frac{n-1}{a} \right )^{s} + (2^{n-1} -1) \left [ \left (1 - \frac{s}{n+a-2} \right ) \left (1+\frac{1}{n+a-2} \right )^s \right ]^{\frac{a}{s+1}} \left (1 + \frac{n-2}{a} \right )^{s} \label{key} \\ 
 &\geq 2^{n-1} \nonumber
 \end{align}
will imply $\card{\mathcal{C}_{X, \mathbf{q}}} + (2^{n-1} - 1)\card{\mathcal{I}_{X, \mathbf{q}}} \geq 2^{n-1} \prod_{i=2}^n (n+a-i)^a$.  So, our goal is to prove~(\ref{key}) whenever $2 \leq n \leq a$ and $0 \leq s \leq a$.

We will first prove (\ref{key}) when $n \geq 3$.  Our first lemma shows that when $s$ is large the first term on the left side (\ref{key}) is large enough to justify (\ref{key}).

\begin{lem} \label{lem: bigsineq}
    Suppose $a \geq n \geq 3$ and $s > 0.73(n+a-2)$. Then,
    \[\left (1 + \frac{n-1}{a} \right )^{s} > 2^{n-1}.\]
\end{lem}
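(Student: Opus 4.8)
The plan is to take logarithms and reduce the claim to an elementary estimate. Writing $t = \frac{n-1}{a} > 0$, the inequality $\left(1+\frac{n-1}{a}\right)^s > 2^{n-1}$ is equivalent to $s\ln(1+t) > (n-1)\ln 2$, and since $s > 0.73(n+a-2)$ and $\ln(1+t)>0$, it suffices to prove
\[
0.73\,(n+a-2)\,\ln\!\left(1+\tfrac{n-1}{a}\right) \;\ge\; (n-1)\ln 2 .
\]
The naive Taylor bound $\ln(1+t) \ge t - t^2/2$ is too lossy here (it already fails to give the estimate when $n=a=3$, where $t=2/3$), so instead I would use the sharper Pad\'e-type lower bound $\ln(1+t) \ge \frac{2t}{2+t}$, valid for all $t \ge 0$. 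This is proved in one line: if $\phi(t) = \ln(1+t) - \frac{2t}{2+t}$, then $\phi(0)=0$ and
\[
\phi'(t) \;=\; \frac{1}{1+t} - \frac{4}{(2+t)^2} \;=\; \frac{t^2}{(1+t)(2+t)^2} \;\ge\; 0,
\]
so $\phi \ge 0$ on $[0,\infty)$.

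Substituting $t = \frac{n-1}{a}$ and clearing denominators gives $\ln\!\left(1+\frac{n-1}{a}\right) \ge \frac{2(n-1)}{2a+n-1}$. Plugging this in, the displayed inequality follows once we show
\[
\frac{1.46\,(n+a-2)}{2a+n-1} \;\ge\; \ln 2 .
\]
Here I would use the hypothesis $n \ge 3$: cross-multiplying, $2(n+a-2) \ge 2a+n-1$ is equivalent to $n \ge 3$, so $\frac{n+a-2}{2a+n-1} \ge \frac12$, and hence the left-hand side is at least $\frac{1.46}{2} = 0.73 > 0.6932 > \ln 2$. Since $s > 0.73(n+a-2)$ strictly, chaining these estimates yields $s\ln\!\left(1+\frac{n-1}{a}\right) > (n-1)\ln 2$, which is exactly the claim after exponentiating. (The hypothesis $a \ge n$ is only used to stay in the regime of the paper; the argument above needs only $n \ge 3$.)

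The only real obstacle is the choice of the logarithm estimate: one needs a lower bound for $\ln(1+t)$ sharp enough near $t \approx 2/3$ (the worst case, attained at $n=a=3$), as well as asymptotically as $t\to 0^+$ with $s\sim 0.73 a$, to beat the constant $\ln 2$; the second-order Taylor polynomial does not suffice, whereas $\ln(1+t)\ge \frac{2t}{2+t}$ does. Once that bound is in hand, everything reduces to the single monotone estimate $\frac{n+a-2}{2a+n-1} \ge \frac12$, which is immediate from $n \ge 3$.
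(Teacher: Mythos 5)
Your proposal is correct and is essentially the paper's own argument written in logarithmic form: your bound $\ln(1+t)\ge \frac{2t}{2+t}$ is exactly the cited Mitrinovi\'{c} inequality $\left(1+\frac{x}{m}\right)^{m+x/2}\ge e^x$ after taking logarithms, and your step $\frac{n+a-2}{2a+n-1}\ge\frac12$ (equivalent to $n\ge 3$) together with $0.73>\ln 2$ mirrors the paper's comparison $n+a-2\ge a+\frac{n-1}{2}$ followed by $e^{0.73(n-1)}>2^{n-1}$. The only difference is that you prove the key logarithm estimate directly instead of citing it.
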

\begin{proof}
   We will use the following well-known inequality~(\cite{M70}, page 267): for any real numbers $x, n > 0$ it is the case that
    \[\left (1 + \frac{x}{n} \right )^{n+\frac{x}{2}} \geq e^x.\]
    
    Since $n+a-2 \geq a+(n-1)/2$, the above inequality implies
    \[\left (1 + \frac{n-1}{a} \right )^{n+a-2} \geq \left (1 + \frac{n-1}{a} \right )^{a+(n-1)/2} \geq e^{n-1}.\]
    Therefore,
    \begin{align*}
        \left (1 + \frac{n-1}{a} \right )^{s} > \left (1 + \frac{n-1}{a} \right )^{0.73(n+a-2)} \geq e^{0.73(n-1)} > 2^{n-1}.
    \end{align*}
\end{proof}

Suppose $x \in \mathbb{R}$.  One can easily show with basic calculus-based arguments that $\ln(1+x) \geq x - x^2/2$ whenever $x \geq 0$ and $\ln(1+x) \geq x - 1.1x^2$ whenever $x \geq -0.73$.  Also, by the AM-GM Inequality, 
\begin{align*}
        &\left (1 + \frac{n-1}{a} \right )^{s} + (2^{n-1} -1) \left [ \left (1 - \frac{s}{n+a-2} \right ) \left (1+\frac{1}{n+a-2} \right )^s \right ]^{\frac{a}{s+1}} \left (1 + \frac{n-2}{a} \right )^{s} \\
        &\geq 2^{n-1} \\
        &\left [\left (1 + \frac{n-1}{a} \right )^{s(s+1)} \left (1 + \frac{n-2}{a} \right )^{(2^{n-1} -1)s(s+1)}  \left [ \left (1 - \frac{s}{n+a-2} \right ) \left (1+\frac{1}{n+a-2} \right )^s \right ]^{(2^{n-1} -1) a} \right ]^{\frac{1}{(s+1)2^{n-1}}}.
    \end{align*}
We will need these facts to handle small values of $s$ when $n \geq 3$.

\begin{lem} \label{lem: smallsineq}
    Suppose $n \geq 3$ and $0 \leq s \leq 0.73(n+a-2)$. Then
    \[\left (1 + \frac{n-1}{a} \right )^{s(s+1)} \left (1 + \frac{n-2}{a} \right )^{(2^{n-1} -1)s(s+1)} \left [ \left (1 - \frac{s}{n+a-2} \right ) \left (1+\frac{1}{n+a-2} \right )^s \right ]^{(2^{n-1} -1) a} \geq 1.\]
\end{lem}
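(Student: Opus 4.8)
The plan is to take logarithms and peel off the variable $s$, reducing everything to an inequality in $n$ and $a$ alone. Write $T = n+a-2$. Taking $\ln$ of the product in the statement, we must show that
\[
s(s+1)\ln\left(1+\frac{n-1}{a}\right) + (2^{n-1}-1)s(s+1)\ln\left(1+\frac{n-2}{a}\right) + (2^{n-1}-1)a\left[\ln\left(1-\frac{s}{T}\right) + s\ln\left(1+\frac{1}{T}\right)\right] \ \ge\ 0.
\]
The only negative contribution is the bracketed quantity $B := \ln\left(1-\frac{s}{T}\right) + s\ln\left(1+\frac{1}{T}\right)$, so the first step is to bound it from below. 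Since $s \le 0.73\,T$ by hypothesis, the inequality $\ln(1+x)\ge x - 1.1x^2$ (valid for $x \ge -0.73$) applies with $x = -s/T$, while $\ln(1+x)\ge x - x^2/2$ applies with $x = 1/T \ge 0$; adding these yields $B \ge -(1.1s^2 + 0.5s)/T^2$.

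The second step is to absorb this into the positive terms. Using $1.1s^2 + 0.5s \le 1.1\,s(s+1)$ (valid for $s\ge 0$), the left side of the displayed inequality is at least
\[
s(s+1)\left[\ln\left(1+\frac{n-1}{a}\right) + (2^{n-1}-1)\ln\left(1+\frac{n-2}{a}\right) - \frac{1.1(2^{n-1}-1)a}{(n+a-2)^2}\right].
\]
Since $s(s+1) \ge 0$ (and $s=0$ is trivial, both sides of the lemma being $1$ there), it therefore suffices to prove, for all integers $a \ge n \ge 3$, that
\[
\ln\left(1+\frac{n-1}{a}\right) + (2^{n-1}-1)\ln\left(1+\frac{n-2}{a}\right) \ \ge\ \frac{1.1(2^{n-1}-1)a}{(n+a-2)^2};
\]
call this inequality $(\star)$.

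To prove $(\star)$ I would use a sufficiently sharp lower bound for the logarithm, such as $\ln(1+x) \ge \frac{2x}{x+2}$ for $x \ge 0$ (a basic calculus fact; the already-cited bound $\ln(1+x)\ge x - x^2/2$ also works, at the cost of more careful bookkeeping), and split into two cases. If $n \ge 4$, a single term on the left already dominates: one verifies
\[
\ln\left(1+\frac{n-2}{a}\right) \ \ge\ \ln\left(1+\frac{2}{a}\right)\ \ge\ \frac{2}{a+1}\ \ge\ \frac{1.1a}{(a+2)^2}\ \ge\ \frac{1.1a}{(n+a-2)^2},
\]
where the third inequality reduces to $0.9a^2 + 6.9a + 8 \ge 0$ and the last uses $n \ge 4$; multiplying through by $2^{n-1}-1$ and discarding the nonnegative term $\ln(1+\tfrac{n-1}{a})$ gives $(\star)$. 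If $n = 3$, $(\star)$ reads $\ln(1+\tfrac{2}{a}) + 3\ln(1+\tfrac{1}{a}) \ge \tfrac{3.3a}{(a+1)^2}$; here I would apply the logarithm bound to each term, obtaining a left side of at least $\frac{2}{a+1} + \frac{6}{2a+1} = \frac{10a+8}{(a+1)(2a+1)}$, clear denominators, and reduce to $3.4a^2 + 14.7a + 8 \ge 0$, true for every $a \ge 1$.

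The main obstacle is precisely the $n = 3$ case of $(\star)$: there the inequality is essentially tight, the term $\ln(1+\tfrac{n-1}{a})$ cannot be discarded, and the estimates must hold uniformly in $a$, so lossy bounds like $\ln(1+x)\ge x/2$ are not affordable. Everything else — the passage to logarithms, the lower bound for $B$, and the algebraic reduction via $1.1s^2+0.5s \le 1.1\,s(s+1)$ — is routine.
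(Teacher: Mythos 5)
Your proof is correct and follows essentially the same route as the paper: take logarithms, bound the negative part via $\ln(1+x)\ge x-\tfrac{x^2}{2}$ for $x\ge 0$ and $\ln(1+x)\ge x-1.1x^2$ for $x\ge -0.73$, absorb it using $1.1s^2+0.5s\le 1.1\,s(s+1)$, and split into the cases $n\ge 4$ and $n=3$. The only (harmless) deviations are that you peel off the factor $s(s+1)$ to get an $s$-free inequality $(\star)$ and use $\ln(1+x)\ge \tfrac{2x}{x+2}$ on the positive terms where the paper uses the quadratic bound, and your verifications of the resulting polynomial inequalities check out.
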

Notice that Lemma~\ref{lem: bigsineq} along with our application of the AM-GM inequality and
Lemma~\ref{lem: smallsineq} will imply (\ref{key}) when $n \geq 3$. \begin{proof}
     Clearly, the desired inequality is equivalent to
    \begin{align*}
        &s(s+1)\ln \left (1+ \frac{n-1}{a} \right) + (2^{n-1} - 1)s(s+1)\ln \left (1 + \frac{n-2}{a} \right ) \\ 
    &+ (2^{n-1} - 1)a\ln \left (1 - \frac{s}{n+a-2} \right) + (2^{n-1} - 1)as\ln \left (1 + \frac{1}{n+a-2} \right) \geq 0.
    \end{align*}
We will now prove this inequality for $n \geq 4$, and then we will handle $n=3$ separately.  Let $M = 2^{n-1}-1$ and $b = s(s+1)$.  For $n \geq 4$, we see
  \begin{align*}
        &s(s+1)\ln \left (1+ \frac{n-1}{a} \right) + (2^{n-1} - 1)s(s+1)\ln \left (1 + \frac{n-2}{a} \right ) \\ 
    &+ (2^{n-1} - 1)a\ln \left (1 - \frac{s}{n+a-2} \right) + (2^{n-1} - 1)as\ln \left (1 + \frac{1}{n+a-2} \right) \\
    &\geq Mb\ln \left (1 + \frac{n-2}{a}  \right ) +Ma \left( \ln \left (1 - \frac{s}{n+a-2} \right) + s\ln \left (1 + \frac{1}{n+a-2} \right) \right) \\
    &\geq Mb\left (\frac{n-2}{a}-\frac{(n-2)^2}{2a^2}  \right ) +Ma \left( \left (\frac{-s}{n+a-2}-\frac{1.1s^2}{(n+a-2)^2} \right) +  \left (\frac{s}{n+a-2} - \frac{s}{2(n+a-2)^2} \right) \right) \\
    &\geq Mb\left (\frac{n-2}{a}-\frac{(n-2)^2}{2a^2}  \right ) +Ma \left( \frac{-1.1b}{(n+a-2)^2}  \right) \\
    &= Mb \left( \frac{(10n-31)a^3 + 5(n-2)^2(3a^2-(n-2)^2)}{10a^2(a+n-2)^2} \right) \geq 0
    \end{align*}
as desired.  Now, suppose that $n=3$ and $b = s(s+1)$.  Then,
  \begin{align*}
        &s(s+1)\ln \left (1+ \frac{2}{a} \right) + 3s(s+1)\ln \left (1 + \frac{1}{a} \right ) 
    + 3a\ln \left (1 - \frac{s}{a+1} \right) + 3as\ln \left (1 + \frac{1}{a+1} \right) \\
    &\geq b\left (\frac{2}{a} - \frac{2}{a^2} + \frac{3}{a}-\frac{3}{2a^2}  \right ) +3a \left( \frac{-1.1b}{(a+1)^2}  \right) \\
    &= b \left( \frac{a(17a^2-20)+5(13a^2-7)}{10a^2(a+1)^2} \right) \geq 0
    \end{align*}
as desired.
\end{proof}

Finally, to complete our proof of Theorem~\ref{thm: complete} we must prove (\ref{key}) when $n=2$.  In particular, we must prove the following inequality.
\begin{lem} \label{lem: nis2}
 For $a \geq 2$ and $0 \leq s \leq a$,
 \begin{align*} \left (1 + \frac{1}{a} \right )^{s} +  \left [ \left (1 - \frac{s}{a} \right ) \left (1+\frac{1}{a} \right )^s \right ]^{\frac{a}{s+1}}   \geq 2. 
 \end{align*}   
\end{lem}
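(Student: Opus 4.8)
\medskip
\noindent\emph{Plan of proof.}\ The plan is to split $[0,a]$ at the point $s_1:=\ln 2/\ln(1+\tfrac1a)$, for which $(1+\tfrac1a)^{s_1}=2$; note $s_1\le a$ because $(1+\tfrac1a)^a\ge 2$. On $[s_1,a]$ the first summand $(1+\tfrac1a)^s$ is already at least $2$ and the second summand is nonnegative, so the inequality is immediate there, and it remains only to treat $[0,s_1]$. On that range I would bound the two summands separately by truncated exponential series. Writing $L=\ln(1+\tfrac1a)$, the first summand is $e^{sL}\ge 1+sL+\tfrac12 s^2L^2$ (valid since $sL\ge 0$), while the second summand equals $\exp\!\bigl(\tfrac{a}{s+1}(\ln(1-\tfrac{s}{a})+sL)\bigr)\ge 1+\tfrac{a}{s+1}\bigl(\ln(1-\tfrac{s}{a})+sL\bigr)$ by $e^y\ge 1+y$. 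Adding these bounds and clearing the positive factor $s+1$, the inequality reduces to showing
\[
\Phi(s):=\tfrac12 s^2(s+1)L^2+s(s+1+a)L+a\ln\!\Bigl(1-\tfrac{s}{a}\Bigr)\;\ge\;0\qquad\text{for }0\le s\le s_1 .
\]
If this margin turns out to be too slim for the smallest values of $a$, one instead retains the term $\tfrac16 s^3L^3$ in the bound for $e^{sL}$; this only adds the nonnegative convex summand $\tfrac16 s^3(s+1)L^3$ to $\Phi$ and leaves the structural argument below unchanged.

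To prove $\Phi\ge 0$ on $[0,s_1]$ I would exploit the shape of $\Phi$ on $[0,a)$. First, $\Phi(0)=0$ and $\Phi'(0)=(a+1)L-1>0$ because $\ln(1+\tfrac1a)>\tfrac1{a+1}$. Next, $\Phi''''(s)=-\tfrac{6a}{(a-s)^4}<0$, so $\Phi''$ is concave on $[0,a)$; combined with $\Phi''(0)=L^2+2L-\tfrac1a>0$ (again a consequence of $\ln(1+\tfrac1a)>\tfrac1{a+1}$) and $\Phi''(s)\to-\infty$ as $s\to a^-$, the set where $\Phi''\ge 0$ is an interval $[0,\sigma]$. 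Hence $\Phi$ is convex on $[0,\sigma]$ and concave on $[\sigma,a)$. On $[0,\sigma]$ convexity together with $\Phi'(0)>0$ forces $\Phi'\ge\Phi'(0)>0$, so $\Phi$ is increasing there and $\Phi(\sigma)\ge 0$; on $[\sigma,s_1]$ concavity puts $\Phi$ above the chord joining $(\sigma,\Phi(\sigma))$ and $(s_1,\Phi(s_1))$. Therefore $\Phi\ge 0$ on all of $[0,s_1]$ as soon as $\Phi(s_1)\ge 0$.

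The last step — verifying $\Phi(s_1)\ge 0$ for every integer $a\ge 2$ — is, I expect, the real obstacle. Using $s_1L=\ln 2$, this simplifies to
\[
(s_1+1)\ln 2\Bigl(1+\tfrac12\ln 2\Bigr)+a\,\ln\!\frac{2(a-s_1)}{a}\;\ge\;0 .
\]
Setting $\beta=s_1/a=\dfrac{\ln 2}{a\ln(1+1/a)}$, which is decreasing in $a$ with limit $\ln 2$, and dividing the displayed quantity by $a$, we obtain $B(\beta)+\tfrac1a\ln 2(1+\tfrac12\ln 2)$ where $B(\beta)=(1+\beta)\ln 2+\ln(1-\beta)+\tfrac12\beta(\ln 2)^2$. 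Since $B'(\beta)\le\ln 2+\tfrac12(\ln 2)^2-1<0$, $B$ is strictly decreasing, and $B(3/4)=\ln 2\bigl(\tfrac38\ln 2-\tfrac14\bigr)>0$ because $\ln 2>\tfrac23$; as $\beta(a)<\tfrac34$ once $a$ passes a small explicit threshold (one checks $a\ge 6$ suffices), $B(\beta(a))\ge 0$ there and $\Phi(s_1)>0$ follows at once. For the remaining finitely many small values of $a$ one confirms $\Phi(s_1)>0$ by a direct numerical estimate. Here the margin is genuinely thin — it is below $0.06$ when $a=2$ — so those estimates must be carried out with some care, and it is precisely this tightness that makes the case $n=2$ resistant to the uniform AM-GM argument used for $n\ge 3$ and forces its separate treatment.
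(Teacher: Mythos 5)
Your argument is correct, and it takes a genuinely different route from the paper. The paper normalizes $r=s/a$, lower-bounds the two summands by $e^{0.75r}$ and $e^{-\tau r}$ (via $\ln(1+x)\geq x-x^2/2$ and $\ln(1-r)\geq -r-\tau r^2$), and then disposes of the inequality by a five-case analysis over ranges of $r$ with different choices of $\tau$, each case requiring a separate calculus verification. You instead split at $s_1$ with $(1+\tfrac1a)^{s_1}=2$ (trivializing $[s_1,a]$), use the truncations $e^{sL}\geq 1+sL+\tfrac12 s^2L^2$ and $e^y\geq 1+y$ to reduce the remaining range to positivity of the explicit function $\Phi$ on $[0,s_1]$, and then exploit $\Phi''''<0$: since $\Phi''$ is concave, positive at $0$, and tends to $-\infty$, the graph of $\Phi$ is convex-then-concave, so with $\Phi(0)=0$ and $\Phi'(0)=(a+1)\ln(1+\tfrac1a)-1>0$ everything collapses to the single endpoint check $\Phi(s_1)\geq 0$; your asymptotic treatment of that check via $\beta=s_1/a$, the decreasing function $B$, and $B(3/4)>0$ is sound (I confirm $\beta(6)\approx 0.7494<3/4$, though note this threshold is itself tight, $6\ln(7/6)\approx 0.9249$ versus $\tfrac43\ln 2\approx 0.9242$), and the residual values $a\in\{2,3,4,5\}$ do check out numerically ($\Phi(s_1)\approx 0.057,\,0.39,\,0.61,\,0.80$), so the cubic-term fallback you mention is not needed. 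What each approach buys: the paper's case split avoids any second-order shape analysis but pays with several range-wide inequalities to verify; your reduction concentrates all the numerics into finitely many point evaluations of one function, at the price of the fourth-derivative/convexity bookkeeping, and the genuinely thin margin at $a=2$ (about $0.06$) that you flag is real and must be computed carefully in either framework.
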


\begin{proof}
   Throughout the proof, let $r = s/a$, and note that $0 \leq r \leq 1$.  Using $\ln(1+x) \geq x - x^2/2$ for positive real values of $x$, we obtain
  $$ \left (1 + \frac{1}{a} \right )^{s} \geq e^{r - \frac{r}{2a}} \geq e^{0.75r}.$$
  Now, for some real $\tau \in (0.5,\infty)$ notice that if $r$ is such that $\ln(1-r) \geq -r - \tau r^2$, we have
  $$ \left [ \left (1 - \frac{s}{a} \right ) \left (1+\frac{1}{a} \right )^s \right ]^{\frac{a}{s+1}} \geq e^{\frac{a}{s+1} \left(-r - \tau r^2 + r - \frac{r}{2a} \right)} \geq e^{\frac{-\tau}{s+1}(rs+r)} \geq e^{-\tau r}.$$
  So, whenever $r$ is such that $\ln(1-r) \geq -r - \tau r^2$, 
  $$\left (1 + \frac{1}{a} \right )^{s} +  \left [ \left (1 - \frac{s}{a} \right ) \left (1+\frac{1}{a} \right )^s \right ]^{\frac{a}{s+1}} \geq e^{0.75r} + e^{-\tau r}.$$
Finally, one can use basic ideas from calculus to verify the following facts.  When $r \geq 0.93$, $e^{0.75r} \geq 2$.  When $0.78 \leq r \leq 0.93$, $\ln(1-r) \geq -r-2r^2$ and $e^{0.75r}+e^{-2r} \geq 2$. When $0.53 \leq r \leq 0.78$, $\ln(1-r) \geq -r-1.25r^2$ and $e^{0.75r}+e^{-1.25r} \geq 2$. When $0.34 \leq r \leq 0.53$, $\ln(1-r) \geq -r-r^2$ and $e^{0.75r}+e^{-r} \geq 2$.  Finally, when $0 \leq r \leq 0.34$, $\ln(1-r) \geq -r-0.75r^2$ and $e^{0.75r}+e^{-0.75r} \geq 2$.  This completes the proof.
\end{proof}

Having proven (\ref{key}), we are ready to bring all the ingredients together and give a short proof of Theorem~\ref{thm: complete} which we restate.

\begin{customthm}{\bf \ref{thm: complete}}
For each $n, a \in \N$, $\chi_{\ell}(K_n \square K_{a,b}) = n+a$ if and only if $b \geq (n+a-1)!^a/(a-1)!^a$. That is, $f_a(K_n) =   \left( \frac{(n+a-1)!}{(a-1)!} \right)^a$ for each $n, a \in \N$.
\end{customthm}
\begin{proof}
By Theorem~\ref{thm: generalupper}, $f_a(K_n) \leq P_{\ell}(M, n+a-1)^a$. Thus, it suffices to show that $f_a(K_n) \geq P_{\ell}(M, n+a-1)^a$; that is, we want to show that if $M = K_n$ and $H = M \square K_{a, b}$, where $b = P_{\ell}(M, n+a-1)^a - 1$, then $\chi_{\ell}(M\square K_{a, b}) \leq n+a-1$. 
% That is, we want to prove that for any $n+a-1$-assignment $L$ for $H$, there is a proper $L$-coloring of $H$.

Suppose for the sake of contradiction that there exists an $(n+a-1)$-assignment $L$ for $H$ for which there is no proper $L$-coloring of $H$. By Observation~\ref{obs: disjointchoosable}, we may assume that the lists $L(v_i, x_1), \dots, L(v_i, x_a)$ are pairwise disjoint, for all $i \in [n]$.  By (\ref{key}), 
\[\card{\mathcal{I}_X} + \frac{\card{\mathcal{C}_X} - \card{\mathcal{I}_X}}{2^{n-1}} \geq P_{\ell}(M, n+a-1)^a > b.\]
By Lemma~\ref{lem: gencolorbound}, there is a proper $L$-coloring of $H$ which is a contradiction.
\end{proof}

\end{document}